\newtheorem{theorem}{Theorem}[section]
\newtheorem{corollary}[theorem]{Corollary}
\newtheorem{lemma}[theorem]{Lemma}
\newtheorem{problem}[theorem]{Problem}
\theoremstyle{definition}
\newtheorem{example}[theorem]{Example}
\newtheorem{remark}[theorem]{Remark}
\newcommand{\MC}{\mathcal{MC}}
\begin{document}
\author{Taras Banakh}
\address{\emph{T.Banakh}: Department of Mathematics, Ivan Franko National University of Lviv, Universytetska 1, 79000 Lviv, Ukraine and Institute of Mathematics,
and
 Jan Kochanowski University in Kielce, \'Swi\c etokrzyska 15, 25-406 Kielce, Poland}
\author{Artur Bartoszewicz}
\address{\emph{A.Bartoszewicz, E.Szymonik}: Institute of Mathematics,
Technical University of \L \'od\'z, W\'olcza\'nska 215, 93-005 \L \'od\'z,
Poland}
\author{Ma\l gorzata Filipczak}
\address{\emph{M.Filipczak}: Faculty of Mathematics and Computer Sciences,
\L \'od\'z University, Banacha 22, 90-238 \L \'od\'z, Poland}
\author{Emilia Szymonik}

\title[]{Topological and measure properties of some self-similar sets}

\date{}
\subjclass[2010]{40A05, 28A80, 11K31}
\keywords{Self-similar set, multigeometric sequence, Cantorval}

\thanks{The first author has been partially financed by NCN grant
DEC-2012/07/D/ST1/02087.}

\begin{abstract}
Given a finite subset $\Sigma\subset\mathbb{R}$ and a positive real number $%
q<1$ we study topological and measure-theoretic properties of the
self-similar set $K(\Sigma;q)=\big\{\sum_{n=0}^\infty
a_nq^n:(a_n)_{n\in\omega}\in\Sigma^\omega\big\}$, which is the unique
compact solution of the equation $K=\Sigma+qK$. The obtained results are
applied to studying partial sumsets $E(x)=\big\{\sum_{n=0}^\infty
x_n\varepsilon_n:(\varepsilon_n)_{n\in\omega}\in\{0,1\}^\omega\big\}$ of
some (multigeometric) sequences $x=(x_n)_{n\in\omega}$.
\end{abstract}

\maketitle

\section{Introduction}

Suppose that $x=\left( x_{n}\right) _{n=1}^{\infty }$ is an absolutely
summable sequence with infinitely many nonzero terms and let
\begin{equation*}
E( x) =\Big\{\sum_{n=1}^\infty\varepsilon
_{n}x_{n}:(\varepsilon_{n})_{n=1}^{\infty }\in \{ 0,1\}^{\mathbb{N}}\Big\}
\end{equation*}%
denote the set of all subsums of the series $\sum_{n=1}^{\infty }x_{n},$
called \emph{the achievement set} (or \emph{a partial sumset}) of $x$. The
investigation of topological properties of achievement sets was initiated
almost one hundred years ago.\ In 1914 Soichi Kakeya \cite{K} presented the
following result:

\begin{theorem}[Kakeya]
\label{kakeya} For any sequence $x\in l_{1}\setminus c_{00}$

\begin{enumerate}
\item $E(x)$ is a perfect compact set.

\item If $|x_{n}|>\sum_{i>n}|x_{i}|$ for almost all $n$, then $E(x)$ is
homeomorphic to the ternary Cantor set.

\item If $|x_{n}|\leq \sum_{i>n}|x_{i}|$ for almost all $n$, then $E(x)$ is
a finite union of closed intervals. In the case of non-increasing sequence $%
x $, the last inequality is also necessary for $E(x)$ to be a finite union
of intervals.
\end{enumerate}
\end{theorem}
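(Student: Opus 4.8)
The plan is to treat the three assertions separately, after two normalizing reductions. Since $\sum_n|x_n|<\infty$, the evaluation map $\phi\colon\{0,1\}^{\IN}\to\IR$, $\phi((\e_n))=\sum_n\e_nx_n$, is continuous (two sequences agreeing on the first $N$ coordinates have images differing by at most $\sum_{i>N}|x_i|$, which is uniformly small), so $E(x)=\phi(\{0,1\}^{\IN})$ is compact; this already gives compactness in (1). Writing $x_n=\sigma_n|x_n|$ with $\sigma_n\in\{-1,1\}$ and flipping the coordinates with $\sigma_n=-1$ shows $E(x)$ is a translate of $E((|x_n|))$, and passing to the subsequence of nonzero terms does not affect the tails $\sum_{i>n}|x_i|$; hence throughout I may assume $x_n>0$ for all $n$, the hypotheses of (2) and (3) transferring to this positive sequence for almost all indices. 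For perfectness, given $p=\phi(\e)$ and any large $n$, the sequence obtained by flipping $\e_n$ produces a point of $E(x)$ at distance $x_n\to0$ from $p$, so $p$ is not isolated; this completes (1).

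For (2), assume first the strict inequality holds for \emph{every} $n$. Then $\phi$ is injective: if $\e\neq\delta$ first differ at index $m$, then $x_m=|\sum_{i>m}(\delta_i-\e_i)x_i|\le\sum_{i>m}|x_i|<x_m$, a contradiction. A continuous bijection from the Cantor space $\{0,1\}^{\IN}$ onto $E(x)$ is a homeomorphism, so $E(x)$ is a Cantor set. In the ``almost all'' case the strict inequality holds for all $n\ge N$, so the tail achievement set $T=\{\sum_{n\ge N}\e_nx_n\}$ is a Cantor set, and $E(x)$ is the union of the finitely many translates $\sum_{n<N}\e_nx_n+T$. Being a finite union of nowhere-dense sets it is nowhere dense, and by (1) it is nonempty, compact, and perfect; such a subset of $\IR$ is totally disconnected, hence homeomorphic to the Cantor set by Brouwer's characterization.

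For (3) the core is the lemma: if $y_n>0$, $\sum_ny_n=S<\infty$ and $y_n\le\sum_{i>n}y_i$ for every $n$, then $E((y_n))=[0,S]$. I would prove the nontrivial inclusion $\supseteq$ by a greedy algorithm: given $t\in[0,S]$, set $\e_n=1$ iff adding $y_n$ keeps the running partial sum $\le t$. Each skipped index $n$ satisfies $y_n>t-(\text{running sum})\ge t-\sum\e_ny_n$, so if the final deficit $d=t-\sum\e_ny_n$ were positive then only finitely many indices are skipped; taking $n^*$ to be the last one and using $y_{n^*}\le\sum_{i>n^*}y_i$ at that step forces $d<0$, a contradiction. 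Granting the lemma, under the ``almost all'' hypothesis the tail achievement set is the interval $[0,\sum_{n\ge N}x_n]$, and $E(x)$ is the union of its finitely many translates by the partial sums $\sum_{n<N}\e_nx_n$, i.e.\ a finite union of closed intervals. For necessity with $x$ non-increasing, summability forces $x_n\downarrow0$ with every $x_n>0$; I prove the contrapositive. Suppose $x_n>R_{n+1}:=\sum_{i>n}x_i$ for infinitely many $n$, and put $S=\sum_ix_i$. For each such $n$ I claim $(S-x_n,\,S-R_{n+1})$ is a gap of $E(x)$: its endpoints lie in $E(x)$ (omit only index $n$; omit exactly the indices $>n$), while any $p\in E(x)$ equals $S-\sum_{i\in A}x_i$ for some $A\subseteq\IN$, and if $\sum_{i\in A}x_i<x_n$ then monotonicity forces $A\subseteq\{n+1,n+2,\dots\}$ (an index $j\le n$ would give $x_j\ge x_n$), whence $\sum_{i\in A}x_i\le R_{n+1}$; so no point of $E(x)$ lies strictly between $S-x_n$ and $S-R_{n+1}$. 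For failing indices $n<m$ one has $x_m\le x_{n+1}\le R_{n+1}$, so $S-R_{n+1}\le S-x_m$ and the gaps are pairwise disjoint, yielding infinitely many components and hence no finite union of intervals.

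The main obstacle is the necessity argument: exhibiting \emph{genuine}, pairwise disjoint gaps and ruling out that some other branch of the sum sneaks a point into each candidate gap. This is exactly where the non-increasing hypothesis is indispensable, since it forces every ``small'' omitted set $A$ to lie entirely past $n$; I would expect this step to require the most care. The greedy lemma underlying the sufficiency in (3) is the second delicate point, the remainder being routine compactness and continuity bookkeeping.
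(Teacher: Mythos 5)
The paper does not prove this theorem: it is quoted as a classical result of Kakeya (reference [K], 1914), with the refinement in (3) going back to that paper and to [NS2], so there is no in-paper argument to compare yours against. Your self-contained proof is correct. The reductions (translating away negative terms, discarding zero terms) are legitimate and the four main steps all work: continuity of the evaluation map $\phi$ on $\{0,1\}^{\mathbb N}$ gives compactness, and flipping a single far-out nonzero coordinate gives perfectness; under the strict gap condition $\phi$ is injective, hence a homeomorphism from Cantor space, and in the ``almost all'' case the finite union of translates of the tail Cantor set is compact, perfect and nowhere dense, so Brouwer's characterization applies; the greedy algorithm correctly yields $E((y_n))=[0,S]$ when $y_n\le\sum_{i>n}y_i$ for all $n$ (the key point, which you handle, being that a positive final deficit forces only finitely many skipped indices and the last skipped index then contradicts the tail inequality); and for necessity the intervals $(S-x_n,\,S-R_{n+1})$ at the infinitely many bad indices are genuine, pairwise disjoint complementary gaps with endpoints in $E(x)$, where monotonicity is used exactly where you say it is, to force a small omitted set $A$ to lie beyond $n$. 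The only stylistic remark is that in the greedy step the phrase ``forces $d<0$'' really means ``forces $\sigma>t$, contradicting $\sigma\le t$''; the logic is fine either way.
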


Moreover, Kakeya conjectured that $E(x)$ is either nowhere dense or a finite
union of intervals. Probably, the first counterexample to this conjecture
was given by Weinstein and Shapiro (\cite{WS}) and, independently, by Ferens
(\cite{F}). The simplest example was presented by Guthrie and Nymann \cite%
{GN}: for the sequence $c=\big(\frac{5+(-1)^n}{4^n}\big)_{n=1}^\infty$, the
set $T=E(c)$ contains an interval but is not a finite union of intervals. In
the same paper they formulated the following theorem, finally proved in \cite%
{NS2}:

\begin{theorem}
\label{Guthrie Nymann} For any sequence $x\in l_{1}\setminus c_{00}$, $E(x)$
is one of the following sets:

\begin{enumerate}
\item a finite union of closed intervals;

\item homeomorphic to the Cantor set;

\item homeomorphic to the set $T$.
\end{enumerate}
\end{theorem}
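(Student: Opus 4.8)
The plan is to prove the trichotomy by combining the classical topological characterizations of the Cantor set and of the Guthrie--Nymann set $T$ with the self-similar structure of $E(x)$. By Theorem~\ref{kakeya}(1) the set $E(x)$ is a nonempty perfect compact subset of $\mathbb{R}$, so each of its connected components is either a singleton or a nondegenerate closed interval, which I will call a \emph{bridge}. I would organize the argument around two dichotomies: whether $E(x)$ is totally disconnected, and, if not, whether it has finitely or infinitely many components.

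First I would dispose of the two extreme cases. If $E(x)$ is totally disconnected, then it is a nonempty compact metrizable perfect space with no isolated points and a basis of clopen sets, hence homeomorphic to the Cantor set by Brouwer's characterization; this gives (2). If $E(x)$ has only finitely many components, then each component is clopen in $E(x)$, and since $E(x)$ is perfect no component can be a singleton (a clopen point would be isolated); thus every component is a bridge and $E(x)$ is a finite union of closed intervals, giving (1). This also recovers the qualitative part of Theorem~\ref{kakeya}(3).

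The substance of the proof is the remaining case, in which $E(x)$ has infinitely many components and at least one bridge, and I would show that $E(x)$ is then homeomorphic to $T$. The tool is a topological characterization of $T$: a nonempty compact $C\subseteq\mathbb{R}$ is homeomorphic to $T$ precisely when $C$ is regular closed (that is, $C=\overline{\Int C}$), has infinitely many components, and both $\min C$ and $\max C$ are accumulation points of the union of its bridges; equivalently, when its bridges and its gaps are each dense in $C$ and are two-sidedly interspersed at every point. I would either invoke this characterization or derive it by a Brouwer-type back-and-forth construction matching the gap/bridge patterns. The verification that $E(x)$ satisfies these conditions rests on the self-similar decomposition
\begin{equation*}
E(x)=F_k+E_k,\qquad F_k=\Big\{\textstyle\sum_{n=1}^k\varepsilon_n x_n:\varepsilon\in\{0,1\}^k\Big\},\quad E_k=E\big((x_n)_{n>k}\big),
\end{equation*}
where $F_k$ is a finite set and $\diam E_k\le\sum_{n>k}|x_n|\to 0$. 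Since $E(x)$ is a union of $2^k$ translates of the small tail set $E_k$, a single bridge propagates, under translation by the finitely many elements of $F_k$ and refinement in $k$, to bridges appearing arbitrarily close to every point of $E(x)$ and in particular to $\min E(x)$ and $\max E(x)$; symmetrically, the existence of infinitely many components forces gaps to appear near every point.

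The main obstacle I anticipate is exactly this last step: extracting from the presence of one bridge and infinitely many components the full density-and-interspersion pattern required by the characterization of $T$, and in particular showing that both the union of bridges and the complement accumulate, from the appropriate side, at the two extreme points of $E(x)$. This is where the achievement-set structure is indispensable, since a general regular-closed compactum with infinitely many components (for example a Cantor set together with one disjoint interval) need not be homeomorphic to $T$; the translation-invariance built into $E(x)=F_k+E_k$ is what rules out such isolated-bridge configurations and forces the homogeneous Cantorval pattern.
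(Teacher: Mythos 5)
This theorem is not proved in the paper at all: it is quoted from Guthrie--Nymann \cite{GN} and its proof was only completed by Nymann--S\'aenz \cite{NS2}, so there is no internal argument to compare yours against; your attempt has to be judged as a self-contained proof, and as such it has a genuine gap, which you yourself flag but do not close. The easy cases are fine: total disconnectedness gives the Cantor set by Brouwer's characterization, finitely many components gives a finite union of closed intervals, and the Baire-category argument applied to the decomposition $E(x)=F_k+E_k$ (a cover of any interval in $E(x)$ by $2^k$ closed translates of $E_k$ of small diameter forces $E_k$ to contain an interval) does show that bridges are dense, i.e.\ that $E(x)=\overline{\Int E(x)}$ in the mixed case. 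The problem is the remaining half of the Cantorval condition. Being homeomorphic to $T$ means being an $\mathcal{M}$-Cantorval in the sense of Mendes--Oliveira \cite{MO}: \emph{both} endpoints of \emph{every} nondegenerate component must be accumulation points of \emph{one-point} components. Your proposed characterization of $T$ (regular closed, infinitely many components, $\min$ and $\max$ accumulation points of bridges) is not equivalent to this and is not sufficient: the set $[0,1]\cup\bigcup_{n\ge 1}[2+2^{-2n},2+2^{-2n+1}]$ is compact, regular closed, has infinitely many components and its extreme points are limits of bridges, yet it has no one-point components at all and is not homeomorphic to $T$. So even granting your density claims, the conclusion does not follow from the characterization you invoke.

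Moreover, the step you describe as "symmetric" --- that infinitely many components force gaps, and hence singleton components, to accumulate two-sidedly at the endpoints of every bridge --- is precisely the non-trivial content of the theorem and is not symmetric to the bridge-density argument. Density of gaps near a point does not by itself produce degenerate components there (the gaps could always be flanked by further bridges), and one must rule out $L$- and $R$-type Cantorvals in which only one side of some bridge is approached by singletons. This is exactly where the original argument of \cite{GN} was incomplete and why \cite{NS2} was needed twelve years later; the translation structure $E(x)=F_k+E_k$ alone, as you use it, does not deliver it, because a bridge of $E(x)$ need not be a translate of a bridge of $E_k$ (bridges can be formed by several overlapping translates $a+E_k$), so the local picture around a given bridge endpoint is not simply a scaled copy of the global one. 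To complete the proof you would need the finer combinatorial analysis of how the translates $a+E_k$, $a\in F_k$, overlap, which is the substance of \cite{NS2}.
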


Note, that the set $T=E(c)$ is homeomorphic to $C\cup \bigcup_{n=1}^{\infty
}S_{2n-1}$, where $S_{n}$ denotes the union of the $2^{n-1}$ open middle
thirds which are removed from $[0,1]$ at the $n$-th step in the construction
of the Cantor ternary set $C$. Such sets are called Cantorvals (to emphasize
their similarity to unions of intervals and to the Cantor set
simultaneously). Formally, a \emph{Cantorval} (more precisely, an $\mathcal{M%
}$-Cantorval, see \cite{MO}) is a non-empty compact subset $S$ of the real
line such that $S$ is the closure of its interior, and both endpoints of any
non-degenerated component are accumulation points of one-point components of
$S$. A non-empty subset $C$ of the real line $\mathbb{R}$ will be called a
\emph{Cantor set} if it is compact, zero-dimensional, and has no isolated
points.

Let us observe that Theorem \ref{Guthrie Nymann} says, that $l_{1}$ can be
devided into 4 sets: $c_{00}$ and the sets connected with cases (1), (2) and
(3). Some algebraic and topological properties of these sets have been
recently considered in \cite{BBGS}.

We will describe sequences constructed by Weinstein and Shapiro, Ferens and
Guthrie and Nymann using the notion of multigeometric sequence. We call a
sequence \emph{multigeometric} if it is of the form%
\begin{equation*}
(k_{0},k_{1},\dots ,k_{m},k_{0}q,k_{1}q,\dots
,k_{m}q,k_{0}q^{2},k_{1}q^{2},\dots ,k_{m}q^{2},k_{0}q^{3}\dots )
\end{equation*}%
for some positive numbers $k_{0},\dots ,k_{m}$ and $q\in \left( 0,1\right) $%
.\ We will denote such a sequence by $(k_{0},k_{1},\dots ,k_{m};q)$. Keeping
in mind that the type of $E\left( x\right) $ is the same as $E\left( \alpha
x\right) $, for any $\alpha >0$, we can describe the Weinstein-Shapiro
sequence as%
\begin{equation*}
a=(8,7,6,5,4;\tfrac{1}{10}),
\end{equation*}%
the Ferens sequence as%
\begin{equation*}
b=(7,6,5,4,3;\tfrac{2}{27})
\end{equation*}%
and the Guthrie-Nymann sequence as%
\begin{equation*}
c=(3,2;\tfrac{1}{4}).
\end{equation*}%
Another interesting example of a sequence $d$ with $E(d)$ being Cantorval
was presented by R. Jones in (\cite{J}). The sequence is of the form%
\begin{equation*}
d=(3,2,2,2;\tfrac{19}{109}).
\end{equation*}%
In fact, Jones constructed continuum many sequences generating Cantorvals,
indexed by a parameter $q$, by proving that, for any positive number $q$
with
\begin{equation*}
\frac{1}{5}\leqslant \sum_{n=1}^{\infty }q^{n}<\frac{2}{9}
\end{equation*}%
(i.e. $\frac{1}{6}\leqslant q<\frac{2}{11}$) the achievement set of the
sequence
\begin{equation*}
(3,2,2,2;q)
\end{equation*}%
is a Cantorval.

The structure of the achievement sets $E(x)$ for multigeometric sequences $x$
was studied in the paper \cite{BFS}, which contains a necessary condition
for the achivement set $E(x)$\ to be an interval and sufficient conditions
for $E(x)$ to contain an interval or have Lebesgue measure zero. In the case
of a Guthrie-Nymann-Jones sequence
\begin{equation*}
x_{q}=(3,2,\dots ,2;q),
\end{equation*}%
of rank $m$ (i.e., with $m$ repeated 2's), the set $E(x_{q})\ $is an
interval if and only if $q\geqslant \frac{2}{2m+5}$, $E(x_{q})$ is a Cantor
set of measure zero if $q<\frac{1}{2m+2}$, and $E(x_{q})$ is a Cantorval if $%
q\in \{\frac{1}{2m+2}\}\cup \big[\frac{1}{2m},\frac{1}{2m+5}\big)$. In this
paper we reveal some structural properties of the sets $E(x_{q})$ for $q$
belonging to the \textquotedblleft misterious\textquotedblright\ interval $(%
\frac{1}{2m+2},\frac{1}{2m})$. In particular, we shall show that for almost
all $q$ in this interval the set $E(x_{q})$ has positive Lebesgue measure
and there is a decreasing sequence $(q_{n})$ convergent to $\frac{1}{2m+2}$
for which $E(x_{q_{n}})$ is a Cantor set of zero Lebesgue measure. The above
description of the structure of $E(x_{q})$ can be presented as follows:

\setlength{\unitlength}{1mm}
\begin{picture}(96,25)(-25,0)
\put(3,12){\line(1,0){100}}
\put(2,6){0}
\put(13,16){$\mathcal{C}_0$}
\put(25,6){$\frac{1}{2m+2}$}
\put(25,16){$\MC$}
\put(38,16){$\lambda^+$}
\put(51,6){$\frac{1}{2m}$}
\put(61,16){$\MC$}
\put(75,6){$\frac{2}{2m+5}$}
\put(89,16){$\mathcal{I}$}
\put(102,6){$1$}
\put(3,11){\line(0,1){2}}
\put(103,11){\line(0,1){2}}
\put(28,12){\circle*{1}}
\put(29,12){\circle{1}}
\put(31,12){\circle{1}}
\put(35,12){\circle{1}}
\put(43,12){\circle{1}}
\put(53,12){\circle*{1}}
\put(78,12){\circle*{1.5}}
\end{picture}\newline
where $\mathcal{C}_{0}$ (resp. $\mathcal{MC}$, $\mathcal{I}$) indicates sets
of numbers $q$ for which the set $E(x_{q})$ is a Cantor set of zero Lebesgue
measure (resp. a Cantorval, an interval). The symbol $\lambda ^{+}$
indicates that for almost all $q$ in a given interval the sets $E(x_{q})$
have positive Lebesgue measure, which means that the set $Z=\{q\in \big(%
\frac{1}{2m+2},\frac{1}{2m}\big):\lambda (E(x_{q}))=0\}$ has Lebesgue
measure $\lambda (Z)=0$. Similar diagrams we use later in this paper.

The achievement sets of multigeometric sequences are partial cases of
self-similar sets of the form
\begin{equation*}
K(\Sigma;q)=\Big\{\sum_{n=0}^\infty a_nq^n:(a_n)_{n=0}^\infty\in\Sigma^\omega%
\Big\}
\end{equation*}
where $\Sigma\subset\mathbb{R}$ is a set of real numbers and $q\in(0,1)$.
The set $K(\Sigma;q)$ is self-similar in the sense that $K(\Sigma;q)=%
\Sigma+q\cdot K(\Sigma;q)$. Moreover, the set $K(\Sigma;q)$ can be found as
a unique compact solution $K\subset\mathbb{R}$ of the equation $K=\Sigma+qK$.

It follows that for a multigeometric sequence $x_{q}=(k_{0},\dots ,k_{m};q)$
the achievement set $E(x)$ coincides with the self-similar set $K(\Sigma ;q)$
for the set
\begin{equation*}
\Sigma =\Big\{\sum_{n=0}^{m}k_{n}\varepsilon _{n}:(\varepsilon
_{n})_{n=0}^{m}\in \{0,1\}^{m+1}\Big\}
\end{equation*}%
of all possible sums of the numbers $k_{0},\dots ,k_{m}$. This makes
possible to apply for studying the achievement sets $E(x_{q})$ the theory of
self-similar sets developed in \cite{H}, \cite{Schief} and, first of all, in
\cite{Fa}.

In this paper we shall describe some topological and measure properties of
the self-similar sets $K(\Sigma ;q)$ depending on the value of the
similarity ratio $q\in (0,1)$, and shall apply the obtained result to
establishing topological and measure properties of achievement sets of
multigeometric progressions. To formulate the principal results we need to
introduce some number characteristics of compact subsets $A\subset \mathbb{R}
$.

Given a compact subset $A\subset \mathbb{R}$ containing more than one point
let
\begin{equation*}
\mathrm{diam}\,A=\sup \{|a-b|:a,b\in A\}
\end{equation*}%
be the diameter of $A$ and
\begin{equation*}
\delta (A)=\inf \{|a-b|:a,b\in A,\;a\neq b\}\mbox{ and }\Delta (A)=\sup
\{|a-b|:a,b\in A,\;(a,b)\cap A=\emptyset \}
\end{equation*}%
be the smallest and largest gaps in $A$, respectively. Observe that $A$ is
an interval (equal to $[\min A,\max A]$) if and only if $\Delta (A)=0$.

Also put
\begin{equation*}
I(A)=\frac{\Delta (A)}{\Delta (A)+\mathrm{diam}\,A}\mbox{ \ \ and \ \ }%
i(A)=\inf \{I(B):B\subset A,\;\;2\leq |B|<\omega \}.
\end{equation*}%
In particular, given a finite subset $\Sigma \subset \mathbb{R}$ of
cardinality $|\Sigma |\geq 2$, we will write it as $\Sigma =\{\sigma
_{1},\dots ,\sigma _{s}\}$ for real numbers $\sigma _{1}<\dots <\sigma _{s}$%
. Then we have
\begin{equation*}
\mathrm{diam}(\Sigma )=\sigma _{s}-\sigma _{1},\;\;\delta (\Sigma
)=\min_{i<s}(\sigma _{i+1}-\sigma _{i}), \mbox{ \ and \ }\Delta (\Sigma
)=\max_{i<s}(\sigma _{i+1}-\sigma _{i}).
\end{equation*}

\begin{theorem}
\label{main} Let $\Sigma=\{\sigma_1,\dots,\sigma_s\}$ for some real numbers $%
\sigma_1<\dots<\sigma_s$. The self-similar sets $K(\Sigma;q)$ where $%
q\in(0,1)$ have the following properties:

\begin{enumerate}
\item $K(\Sigma;q)$ is an interval if and only if $q\ge I(\Sigma)$;

\item $K(\Sigma;q)$ is not a finite union of intervals if $q<I(\Sigma)$ and $%
\Delta(\Sigma)\in\{\sigma_2-\sigma_1,\sigma_s-\sigma_{s-1}\}$;

\item $K(\Sigma;q)$ contains an interval if $q\ge i(\Sigma)$;

\item If $d=\frac{\delta(\Sigma)}{\mathrm{diam}(\Sigma)}<\frac1{3+2\sqrt{2}}$
and $\frac1{|\Sigma|}<\frac{\sqrt{d}}{1+\sqrt{d}}$, then for almost all $q\in%
\big(\frac1{|\Sigma|},\frac{\sqrt{d}}{1+\sqrt{d}}\big)$ the set $K(\Sigma;q)$
has positive Lebesgue measure and the set $K(\Sigma;\sqrt{q})$ contains an
interval;

\item $K(\Sigma;q)$ is a Cantor set of zero Lebesgue measure if $%
q<\frac1{|\Sigma|}$ or, more generally, if $q^n<\frac1{|\Sigma_n|}$ for some
$n\in\mathbb{N}$ where $\Sigma_n=\big\{%
\sum_{k=0}^{n-1}a_kq^k:(a_k)_{k=0}^{n-1}\in\Sigma^n\big\}$.

\item If $\Sigma\supset\{a,a+1,b+1,c+1,b+|\Sigma|,c+|\Sigma|\}$ for some
real numbers $a,b,c\in\mathbb{R}$ with $b\ne c$, then there is a strictly
decreasing sequence $(q_n)_{n\in\omega}$ with $\lim_{n\to\infty}q_n=\frac1{|%
\Sigma|}$ such that the sets $K(\Sigma;q_n)$ has Lebesgue mesure zero.
\end{enumerate}
\end{theorem}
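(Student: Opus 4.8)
The plan is to realize each $K(\Sigma;q_n)$ over a long block and invoke item (5). Put $s=|\Sigma|$; grouping the digits of $\sum_k a_kq^k$ into consecutive blocks of length $N$ shows $K(\Sigma;q)=K(\Sigma_N;q^N)$, so by (5) it suffices, for each $n$, to produce a $q_n$ near $1/s$ and a block length $N$ with $q_n^{N}|\Sigma_N|<1$. I define $q_n$ to be the unique root in $(1/s,1)$ of
\[(s-1)\sum_{i=1}^{n}q^{i}=1,\]
and take $N=n+1$. At $q=1/s$ the left side equals $1-s^{-n}<1$, and it is strictly increasing in $q$ and in $n$; hence $q_n>1/s$, the sequence $(q_n)$ is strictly decreasing, and $q_n\to1/s$. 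These are the required parameters.

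The core step is to manufacture many coincidences among length-$(n+1)$ subsums at $q=q_n$. For each $\chi\colon\{1,\dots,n\}\to\{b,c\}$ form the words
\[X_\chi=(a+1,\chi(1)+1,\dots,\chi(n)+1),\qquad Y_\chi=(a,\chi(1)+s,\dots,\chi(n)+s),\]
whose letters all belong to $\Sigma$ by hypothesis. Since
\[\sum_{i=0}^{n}X_\chi(i)q_n^{i}-\sum_{i=0}^{n}Y_\chi(i)q_n^{i}=1-(s-1)\sum_{i=1}^{n}q_n^{i}=0,\]
the words $X_\chi$ and $Y_\chi$ have equal value. As $b\ne c$, the $2^n$ words $X_\chi$ are pairwise distinct, so are the $2^n$ words $Y_\chi$, and the two families are disjoint (they differ at position $0$); hence these $2^{n+1}$ words realize at most $2^n$ distinct values. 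Comparing with the remaining $s^{n+1}-2^{n+1}$ words gives the key cardinality bound $|\Sigma_{n+1}|\le s^{n+1}-2^{n}$.

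It remains to verify $q_n^{n+1}|\Sigma_{n+1}|<1$ for all large $n$. A short computation gives $q_n-\tfrac1s\sim(s-1)s^{-n-2}$, whence $q_n^{-(n+1)}=s^{n+1}-(n+1)(s-1)+o(1)$; thus the deficiency one is permitted, namely $s^{n+1}-q_n^{-(n+1)}$, grows only linearly in $n$. This is exactly the crux of the matter: a single coincidence, or any bounded number of them, would be hopelessly insufficient, because $q_n$ lies so close to $1/s$ that one must overcome a deficiency of order $(n+1)(s-1)$. The binary freedom in the choice of $\chi$ is what saves the day — it produces the exponential deficiency $2^{n}$, which exceeds $(n+1)(s-1)$ once $n$ is large, so that $|\Sigma_{n+1}|\le s^{n+1}-2^{n}<q_n^{-(n+1)}$. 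For such $n$, item (5) applied with block length $n+1$ shows that $K(\Sigma;q_n)$ is a Cantor set of Lebesgue measure zero, and passing to the tail of $(q_n)$ yields the desired strictly decreasing sequence converging to $1/|\Sigma|$. The only nonroutine point is therefore the necessity of two distinct offsets $b\ne c$: the unit gap $\{a,a+1\}$ alone supplies a single algebraic relation, but combining it with the two independent $(s-1)$-gaps $\{b+1,b+s\}$ and $\{c+1,c+s\}$ converts that one relation into exponentially many collisions, exactly as needed.
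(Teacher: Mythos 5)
Your argument establishes only item (6) of the six-part theorem: items (1)--(4) are not addressed at all, and item (5) is invoked as a black box rather than proved, so as a proof of the full statement this is incomplete. For item (6) itself, however, your proof is correct and is essentially the paper's own argument (Lemma~\ref{l2} plus Theorem~\ref{Th2}, up to an index shift): the same $q_n$ defined as the root of $(s-1)\sum_{i=1}^{N}q^i=1$, the same family of colliding words built from the pair $\{a,a+1\}$ and the gaps $\{b+1,b+s\}$, $\{c+1,c+s\}$ with $\varepsilon_i\in\{b+1,c+1\}$, the same count $|\Sigma_N|\le s^N-2^{N-1}$, and the same conclusion via Theorem~\ref{th9}. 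The only stylistic difference is that you verify $(s^N-2^{N-1})q_n^N<1$ by an asymptotic expansion of $q_n^{-N}$, where the paper uses explicit inequalities (the bound $q<1/\bigl(s(1-s^{-(N-1)})\bigr)$ and Bernoulli's inequality); both correctly reduce the matter to $2^{N-1}$ eventually dominating a linear term in $N$.
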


The statements (1)--(3) from this theorem will be proved in Section~\ref%
{s:int}, the statement (4) in Section~\ref{s:pos} and (5),(6) in Section~\ref%
{s:null}. Writing that for almost all $q$ in an interval $(a,b)$ some
property $\mathcal{P}(q)$ holds we have in mind that the set $Z=\{q\in (a,b):%
\mathcal{P}(q)$ does not hold$\}$ has Lebesgue measure $\lambda (Z)=0$.

\section{Intervals and Cantorvals}

\label{s:int}

In this section we generalize results of \cite{BFS} detecting the
self-similar sets $K(\Sigma ;q)$ which are intervals or Cantorvals. In the
following theorem we prove the statements (1)--(3) of Theorem~\ref{main}.

\begin{theorem}
\label{th3} Let $q\in(0,1)$ and $\Sigma=\{\sigma_1,\dots,\sigma_s\}\subset%
\mathbb{R}$ be a finite set with $\sigma_1<\dots<\sigma_s$. The self-similar
set $K(\Sigma;q)=\big\{\sum_{i=0}^{\infty }a
_{i}q^{i}:(a_{i})_{i\in\omega}\in \Sigma ^{\omega}\big\}$

\begin{enumerate}
\item is an interval if and only if $q\ge I(\Sigma)$;

\item contains an interval if $q\ge i(\Sigma)$;

\item is not a finite union of intervals if $q<I(\Sigma)$ and $%
\Delta(\Sigma)\in\{\sigma_2-\sigma_1,\sigma_s-\sigma_{s-1}\}$.
\end{enumerate}
\end{theorem}

\begin{proof}
1. Observe that $\mathrm{diam} K(\Sigma;q)=\mathrm{diam}(\Sigma)/(1-q)$.
Assuming that $q\geq I(\Sigma)=\Delta(\Sigma)/(\Delta(\Sigma)+\mathrm{diam}%
\Sigma)$, we conclude that $\Delta(\Sigma)\le q\cdot \mathrm{diam}
(\Sigma)/(1-q)=q\cdot\mathrm{diam} K(\Sigma;q)$, which implies that
\begin{equation*}
\Delta(K(\Sigma;q))=\Delta(\Sigma+q\cdot K(\Sigma;q))\le \Delta(q\cdot
K(\Sigma;q))=q\cdot \Delta(K,\Sigma;q).
\end{equation*}
Since $q<1$ this inequality is possible only in case $\Delta(K(\Sigma;q))=0$%
, which means that $K(\Sigma;q)$ is an interval.

If $q<\Delta(\Sigma)/(\Delta(\Sigma)+\mathrm{diam}\Sigma)$, then $%
\Delta(\Sigma)>q\cdot \mathrm{diam}(\Sigma)/(1-q)=q\cdot \mathrm{diam}
(K(\Sigma;q))$ and we can find two consequtive points $a<b$ in $\Sigma$ with
$b=a+\Delta(\Sigma)>a+\mathrm{diam}(q K(\Sigma;q))$ and conclude that $%
[a,b]\cap K(\Sigma;q)=[a,b]\cap(\Sigma+qK(\Sigma;q))\subset [a,a+\mathrm{diam%
}(q\,K(\Sigma;q))]\ne [a,b]$, so $K( \Sigma ;q) $ is not an interval.
\smallskip

2. Now assume that $q\ge i(\Sigma)$ and find a subset $B\subset \Sigma$ such
that $I(B)=i(\Sigma)<q$. By the preceding item, the self-similar set $%
K(B;q)=B+q K(B;q)$ is an interval. Consequently, $K(\Sigma;q)$ contains the
interval $K(B;q)$. \smallskip

3. Finally assume that $\Delta(\Sigma)=\sigma_2-\sigma_1$ and $q<I(\Sigma)$.
Since for every $a\in\Sigma$ we get $K(\Sigma-a;q)=K(\Sigma;q)-\frac{a}{1-q}$%
, we can replace $\Sigma$ by its shift and assume that $\sigma_1=0$ and
hence $\Delta(\Sigma)=\sigma_2-\sigma_1=\sigma_2$. It follows from $%
q<I(\Sigma)=\sigma_2/(\sigma_2+\mathrm{diam} \Sigma)$ that for any $j\in
\mathbb{N}$, the interval $\big( \sum_{n=j+1}^{\infty }q^{n}\sigma
_{s},q^{j}\sigma _{2}\big) $ is nonempty and disjoint from $K\left( \Sigma
;q\right) $. Hence, no interval of the form $\left[ 0,\varepsilon \right] $
is included in $K\left( \Sigma ;q\right) $. But $0\in K\left( \Sigma
;q\right) $, so $K\left( \Sigma ;q\right) $ is not a finite union of closed
intervals. By analogy we can consider the case $\Delta(\Sigma)=\sigma_s-%
\sigma_{s-1}$.
\end{proof}

In particular, Theorem \ref{th3} implies:

\begin{corollary}
For $\Sigma =\{0,1,2,\dots ,s-1\}$ the set $K\left( \Sigma ;q\right) $ is an
interval if and only if $q\geq I(\Sigma)=\frac{1}{|\Sigma|}$.
\end{corollary}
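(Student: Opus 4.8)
The plan is to derive this entirely from part (1) of Theorem~\ref{th3}, which already characterizes the intervals among the sets $K(\Sigma;q)$ via the condition $q \ge I(\Sigma)$. Nothing new needs to be argued topologically; all that remains is to evaluate the quantity $I(\Sigma)$ for the specific arithmetic progression $\Sigma = \{0, 1, \dots, s-1\}$, so the proof reduces to a short computation rather than a fresh argument.

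First I would record the two relevant gap statistics of $\Sigma$. Since $\sigma_1 = 0$ and $\sigma_s = s-1$, the diameter is $\diam(\Sigma) = \sigma_s - \sigma_1 = s-1$. Moreover, all consecutive differences $\sigma_{i+1} - \sigma_i$ equal $1$, so the largest gap is $\Delta(\Sigma) = \max_{i<s}(\sigma_{i+1}-\sigma_i) = 1$. (Here it is worth noting explicitly that the progression has exactly $s$ distinct terms, so $|\Sigma| = s$.)

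Substituting these values into the definition of $I$ then yields
\[
I(\Sigma) = \frac{\Delta(\Sigma)}{\Delta(\Sigma) + \diam(\Sigma)} = \frac{1}{1 + (s-1)} = \frac{1}{s} = \frac{1}{|\Sigma|}.
\]
By Theorem~\ref{th3}(1), the set $K(\Sigma;q)$ is an interval if and only if $q \ge I(\Sigma)$, which is precisely the condition $q \ge \frac{1}{|\Sigma|}$, as claimed.

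I do not expect any genuine obstacle here: the entire content is already packaged in Theorem~\ref{th3}(1), and the only thing to verify is that the arithmetic progression has all consecutive gaps equal, so that $\Delta(\Sigma)$ coincides with the common spacing $1$ while $\diam(\Sigma) = s-1$. The sole point meriting a moment's care is the bookkeeping $|\Sigma| = s$, which makes the final identity $I(\Sigma) = 1/|\Sigma|$ transparent and completes the reduction.
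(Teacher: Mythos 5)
Your proof is correct and matches the paper's (implicit) argument exactly: the paper states this corollary as an immediate consequence of Theorem~\ref{th3}(1), and the only content is the computation $\Delta(\Sigma)=1$, $\mathrm{diam}(\Sigma)=s-1$, hence $I(\Sigma)=\frac{1}{s}=\frac{1}{|\Sigma|}$, which you carry out correctly.
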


\begin{corollary}
If $\{k,k+1,\dots ,k+n-1\}\subset \Sigma $, then $i(\Sigma)\le \frac1n$ and
for every $q\ge \frac1n$ the set $K(\Sigma;q)$ contains an interval.
\end{corollary}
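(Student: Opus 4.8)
The plan is to exhibit a single finite subset $B\subset\Sigma$ whose index $I(B)$ equals $\frac1n$ and then feed it into Theorem~\ref{th3}. The natural candidate is the block of consecutive integers itself, $B=\{k,k+1,\dots,k+n-1\}$, which by hypothesis lies inside $\Sigma$ and has cardinality $n$. First I would record its two relevant parameters: since the extreme points are $k$ and $k+n-1$, we have $\mathrm{diam}(B)=n-1$; and since the points of $B$ are equally spaced at distance $1$, every gap $(a,b)$ with $a,b\in B$ and $(a,b)\cap B=\emptyset$ has length $1$ (a wider pair would trap an intermediate point of $B$), so $\Delta(B)=1$. Plugging into the definition gives
\[
I(B)=\frac{\Delta(B)}{\Delta(B)+\mathrm{diam}(B)}=\frac{1}{1+(n-1)}=\frac1n.
\]

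Since $B\subset\Sigma$ is a finite set with $2\le|B|<\omega$ (assuming $n\ge2$; the case $n=1$ is vacuous, as then $q\ge\frac1n=1$ is impossible for $q\in(0,1)$), it is an admissible competitor in the infimum defining $i(\Sigma)$, whence $i(\Sigma)\le I(B)=\frac1n$. This settles the first assertion. For the second, I would take any $q\ge\frac1n$, so that $q\ge\frac1n=I(B)$; Theorem~\ref{th3}(1) then guarantees that the self-similar set $K(B;q)$ is a nondegenerate interval (its diameter is $\mathrm{diam}(B)/(1-q)=(n-1)/(1-q)>0$). Because $B\subset\Sigma$ forces $K(B;q)\subset K(\Sigma;q)$ — immediate from the series description, as any admissible sum over $B$ is also an admissible sum over $\Sigma$ — the larger set $K(\Sigma;q)$ contains this interval. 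Equivalently, one may invoke Theorem~\ref{th3}(2) directly, since $q\ge\frac1n\ge i(\Sigma)$.

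There is essentially no genuine obstacle here: the statement is a direct specialization of Theorem~\ref{th3}. The only things demanding care are the elementary computation of $\Delta(B)$ and $\mathrm{diam}(B)$ for the arithmetic block, and the bookkeeping that $B$ really qualifies in the infimum $i(\Sigma)$ (i.e.\ that $|B|\ge2$) and that the monotonicity $K(B;q)\subset K(\Sigma;q)$ holds.
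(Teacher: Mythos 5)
Your proposal is correct and follows exactly the route the paper intends: the corollary is stated as an immediate consequence of Theorem~\ref{th3}, and your explicit computation $I(\{k,\dots,k+n-1\})=\frac{1}{1+(n-1)}=\frac1n$, together with the inclusion $K(B;q)\subset K(\Sigma;q)$ and Theorem~\ref{th3}(1) (equivalently, (2)), is precisely the specialization being invoked. The extra care about $n\ge 2$ and nondegeneracy of the interval is fine but not a point of divergence.
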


In particular, for the Guthrie-Nymann-Jones multigeometric sequence $%
x_{q}=(3,2,\dots ,2;q)$ of rank $m$ the sumset $\Sigma =\{0,2,\dots
,2m+1,2m+3\}$ has cardinality $|\Sigma |=2m+2$, $I(\Sigma )=\frac{\Delta
(\Sigma )}{\Delta (\Sigma )+\mathrm{diam}\Sigma }=\frac{2}{2m+5}$, $i(\Sigma
)=\min \big\{ \frac{1}{2m},\frac{2}{2m+5}\big\} $, and $d=\frac{\delta
(\Sigma )}{\mathrm{diam}(\Sigma )}=\frac{1}{2m+3}$. So, for $q\in \big[\frac{%
2}{2m+5},1\big)$ the set $E(x_{q})=K(\Sigma ;q)$ is an interval and for $%
q\in \big[\frac{1}{2m},\frac{2}{2m+5}\big)$ a Cantorval.

\section{Sets of positive measure}

\label{s:pos}

In this section we shall prove the statement (4) of Theorem~\ref{main}
detecting numbers $q$ for which the self-similar set $K(\Sigma;q)$ has
positive Lebesgue measure $\lambda(K(\Sigma;q))$. For this we shall apply
the deep results of Boris Solomyak \cite{S} related to the distribution of
the random series $\sum_{n=0}^{\infty }a_n\lambda ^{n},$ where the
coefficients $a_n\in\Sigma$ are chosen independently with probability $\frac{%
1}{|\Sigma|}$ each.

Given a finite subset $\Sigma\subset\mathbb{R}$ consider the number
\begin{equation*}
\alpha(\Sigma)=\inf\big\{x\in(0,1):\exists
(a_n)_{n\in\omega}\in(\Sigma-\Sigma)^\omega\setminus\{0\}^\omega%
\mbox{ such
that }\sum_{n=0}^\infty a_nx^n=0\mbox{ and }\sum_{n=1}^\infty na_nx^{n-1}=0%
\big\}.
\end{equation*}

The first part of the following theorem was proved by Solomyak in \cite[1.2]%
{S}:

\begin{theorem}
\label{soloma} Let $\Sigma\subset\mathbb{R}$ be a finite subset. If $%
\frac1{|\Sigma|}<\alpha(\Sigma)$, then for almost all $q$ in the interval $%
\big(\frac1{|\Sigma|},\alpha(\Sigma)\big)$ the self-similar set $K(\Sigma;q)$
has positive Lebesgue measure and the set $K(\Sigma;\sqrt{q})$ contains an
interval.
\end{theorem}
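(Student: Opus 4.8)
The plan is to deduce both conclusions from the stronger form of Solomyak's result, which asserts not merely that the distribution measure $\mu_q$ of the random series $\sum_{n=0}^\infty a_nq^n$ (with the coefficients $a_n\in\Sigma$ chosen independently and uniformly) is absolutely continuous for almost all $q\in\big(\frac1{|\Sigma|},\alpha(\Sigma)\big)$, but that for almost all such $q$ its density $g_q$ lies in $L^2(\mathbb{R})$; this is exactly what the transversality method behind \cite{S} delivers. The positive-measure assertion is then immediate, since an absolutely continuous probability measure has support of positive Lebesgue measure and $\mathrm{supp}\,\mu_q=K(\Sigma;q)$.

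For the interval assertion I would exploit a self-convolution structure obtained by splitting the random series according to the parity of the index. Writing $\lambda=\sqrt q$ and grouping even- and odd-indexed terms gives $\sum_{n=0}^\infty a_n\lambda^n=\sum_{k=0}^\infty a_{2k}q^k+\sqrt q\sum_{k=0}^\infty a_{2k+1}q^k$, where the two groups are independent and each is distributed as the $K(\Sigma;q)$-series. Consequently $K(\Sigma;\sqrt q)=K(\Sigma;q)+\sqrt q\,K(\Sigma;q)$ and, at the level of measures, $\mu_{\sqrt q}=\mu_q*T_*\mu_q$, where $T\colon x\mapsto\sqrt q\,x$ is the corresponding homothety. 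Since $g_q\in L^2$, the pushforward $T_*\mu_q$ has density $x\mapsto q^{-1/2}g_q(x/\sqrt q)$, which again lies in $L^2$.

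The key step is then routine harmonic analysis: the convolution of two $L^2$ densities is a bounded continuous function (boundedness by Cauchy--Schwarz, continuity by the continuity of translation in $L^2$). Hence $\mu_{\sqrt q}$ has a continuous density $f\ge 0$. Being a probability density, $f$ is not identically zero, so $f(x_0)>0$ for some $x_0$, and by continuity $f>0$ on an open interval $J\ni x_0$. Then $J\subset\mathrm{supp}\,\mu_{\sqrt q}=K(\Sigma;\sqrt q)$, so $K(\Sigma;\sqrt q)$ contains an interval.

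The main obstacle is really just invoking the correct, $L^2$-strength version of Solomyak's theorem rather than bare absolute continuity: the convolution-to-continuity argument genuinely needs square-integrability, since a convolution of merely $L^1$ densities need not be continuous. Once the $L^2$ density is in hand for almost every $q$ in the interval, the remaining argument is deterministic and applies to that same full-measure set of parameters, so no additional exceptional set is introduced.
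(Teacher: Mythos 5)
Your proof is correct, and it shares with the paper the key structural observation: splitting the series for ratio $\sqrt q$ by parity of the index gives $K(\Sigma;\sqrt q)=K(\Sigma;q)+\sqrt q\,K(\Sigma;q)$ and, at the level of distributions, $\mu_{\sqrt q}=\mu_q*T_*\mu_q$. Where you diverge is in the final step. The paper uses only the weaker consequence of Solomyak's Theorem~1.2 --- that $K(\Sigma;q)$ has positive Lebesgue measure for almost every $q$ in the interval --- and then invokes the Steinhaus theorem: the algebraic sum of two sets of positive Lebesgue measure contains an interval. You instead import the full $L^2$-density strength of Solomyak's result and run the convolution argument, concluding that $\mu_{\sqrt q}$ has a continuous density which is positive on some open set contained in the support. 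Both routes are valid (and Solomyak does indeed deliver $L^2$ densities, so your stronger citation is legitimate); your argument is essentially a proof of the Steinhaus sum theorem in this special case, since convolving $L^2$ densities is one standard way to establish it, so the extra $L^2$ input buys nothing beyond what Steinhaus already gives here, though it does yield the slightly stronger byproduct that $\mu_{\sqrt q}$ has a continuous (indeed $C_0$) density. One cosmetic note: the paper's own proof contains a slip, asserting that ``the set $K(\Sigma;q)$ contains an interval'' where $K(\Sigma;\sqrt q)$ is meant; your formulation is the correct one.
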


\begin{proof}
By Theorem 1.2 of \cite{S}, for almost all $q\in\big(\frac1{|\Sigma|},%
\alpha(\Sigma)\big)$ the self-similar set $K(\Sigma;q)$ has positive
Lebesgue measure. Since $K(\Sigma;\sqrt{q})=K(\Sigma;q)+\sqrt{q}\cdot
K(\Sigma;q)$, the set $K(\Sigma;q)$ contains an interval, being the sum of
two sets of positive Lebesque measure (according to the famous Steinhaus
Theorem \cite{St}).
\end{proof}

The definition of Solomyak's constant $\alpha(\Sigma)$ does not suggest any
efficient way of its calculation. In \cite{S} Solomyak found an efficient
lower bound on $\alpha(\Sigma)$ based on the notion of a $(*)$-function,
i.e., a function of the form
\begin{equation*}
g(x)=-\sum_{k=1}^{n-1}x^k+\gamma x^n+\sum_{k=n+1}^\infty x^k
\end{equation*}%
for some $n\in\mathbb{N}$ and $\gamma\in[-1,1]$. In Lemma~3.1 \cite{S}
Solomyak proved that every $(*)$-function $g(x)$ has a unique critical point
on $[0,1)$ at which $g$ takes its minimal value. Moreover, for every $d>0$
there is a unique $(*)$-function $g_d(x)$ such that $\min_{[0,1)}g_d=-d$.
The unique critical point $x_d\in g_d^{-1}(-d)\in[0,1)$ of $g_d$ will be
denoted by $\underline{\alpha}(d)$. The following lower bound on the number $%
\alpha(\Sigma)$ follows from Proposition 3.2 and inequality (15) in \cite{S}.

\begin{lemma}
For every finite set $\Sigma\subset\mathbb{R}$ of cardinality $|\Sigma|\ge 2$
we get
\begin{equation*}
\alpha(\Sigma)\ge\underline{\alpha}(d)\mbox{ \ where \ }d=\frac{%
\delta(\Sigma)}{\mathrm{diam}(\Sigma)}.
\end{equation*}
\end{lemma}

The function $\underline{\alpha}(d)$ can be calculated effectively (at least
for $d\le\frac12$).

\begin{lemma}
\label{bound} If $0<d\le\frac1{3+2\sqrt{2}}$, then
\begin{equation*}
\underline{\alpha}(d)=\frac{\sqrt{d}}{1+\sqrt{d}}.
\end{equation*}
\end{lemma}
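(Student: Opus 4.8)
The plan is to show that for $d$ in the stated range the unique $(*)$-function $g_d$ with minimum $-d$ is the simplest possible one, namely the one with $n=1$, and to compute its critical point by hand. First I would restrict attention to $(*)$-functions with $n=1$, which (the first sum being empty) have the explicit closed form
\[
g(x)=\gamma x+\sum_{k=2}^\infty x^k=\gamma x+\frac{x^2}{1-x},\qquad \gamma\in[-1,1].
\]
Differentiating gives $g'(x)=\gamma+\frac{x(2-x)}{(1-x)^2}$, so a critical point $x\in(0,1)$ must satisfy $\gamma=-\frac{x(2-x)}{(1-x)^2}$. Substituting this relation back into $g$ and simplifying, I expect the minimal value to collapse to the clean expression $g(x)=-\bigl(\tfrac{x}{1-x}\bigr)^2$.

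Next I would solve $-\bigl(\tfrac{x}{1-x}\bigr)^2=-d$, i.e. $\tfrac{x}{1-x}=\sqrt d$, which yields $x=\tfrac{\sqrt d}{1+\sqrt d}$, precisely the claimed value of $\underline{\alpha}(d)$. It then remains to check that this critical point genuinely comes from an admissible $(*)$-function, i.e. that the associated $\gamma=-\frac{x(2-x)}{(1-x)^2}$ lies in $[-1,1]$. The constraint $\gamma\ge -1$ is equivalent to $\frac{x(2-x)}{(1-x)^2}\le 1$, i.e. to $2x^2-4x+1\ge 0$, which holds exactly for $x\le 1-\tfrac1{\sqrt2}$. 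Translating this bound on $x=\tfrac{\sqrt d}{1+\sqrt d}$ back to $d$ gives $\sqrt d\le\sqrt2-1$, that is $d\le(\sqrt2-1)^2=3-2\sqrt2=\tfrac1{3+2\sqrt2}$, matching the hypothesis exactly.

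Finally I would invoke the uniqueness of $g_d$ already recorded from Lemma~3.1 of \cite{S}: since the $n=1$ function exhibited above is a $(*)$-function whose minimum equals $-d$, it must coincide with $g_d$, and hence $\underline{\alpha}(d)$ equals its critical point $\tfrac{\sqrt d}{1+\sqrt d}$. (As $\gamma$ runs over $[-1,0]$ the value of $d$ runs monotonically over $(0,\,3-2\sqrt2]$, so every $d$ in the range is indeed realized, while for $\gamma\in(0,1]$ both summands of $g'$ are positive and the minimum degenerates to $0$ at $x=0$.)

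The main obstacle I anticipate is not the calculus but the bookkeeping at the boundary: one must confirm that $d=\tfrac1{3+2\sqrt2}$ is exactly the threshold at which $\gamma$ reaches $-1$, so that for larger $d$ the minimizing $(*)$-function would be forced to have $n\ge 2$ and a different (larger) critical point, and the stated formula would fail. The algebraic identity $\tfrac1{3+2\sqrt2}=3-2\sqrt2=(\sqrt2-1)^2$ is what makes the endpoint match up cleanly, and I would verify it explicitly to pin down the edge of validity.
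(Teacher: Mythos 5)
Your proposal is correct and follows essentially the same route as the paper: both restrict to the $n=1$ family $g(x)=\gamma x+\frac{x^2}{1-x}$, use the critical-point relation to collapse the minimum to $-\bigl(\frac{x}{1-x}\bigr)^2=-d$, solve for $x=\frac{\sqrt d}{1+\sqrt d}$, and appeal to the uniqueness of $g_d$ from Solomyak's Lemma~3.1. Your explicit check that $\gamma\ge -1$ forces $d\le\frac1{3+2\sqrt2}$ is just the paper's observation that the extremal function $-x+\frac{x^2}{1-x}$ has minimum $-\frac1{3+2\sqrt2}$, phrased from the other end.
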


\begin{proof}
Observe that the minimal value of the $(*)$-function $g(x)=-x+\sum_{k=2}^%
\infty x^k=-x+\frac{x^2}{1-x}$ is equal to $-\frac1{3+2\sqrt{2}}$, which
implies that for $d\in \big(0,\frac1{3+2\sqrt{2}}\big]$ the number $%
\underline{\alpha}(d)$ is equal to the critical point of the unique $(*)$%
-function $g(x)=\gamma x+\sum_{k=2}^\infty x^k=-1+(\gamma-1)x+\frac1{1-x}$
with $\min_{[0,1)}g=-d$. This $(*)$-function has derivative $%
g^{\prime}(x)=(\gamma-1)+\frac1{(1-x)^2}$. If $x$ is the critical point of $%
g $, then $1-\gamma=\frac1{(1-x)^2}$ and the equality
\begin{equation*}
d=-1+(\gamma-1)x+\frac1{1-x}=-1-\frac{x}{(1-x)^2}+\frac1{1-x}
\end{equation*}%
has the solution
\begin{equation*}
x=1-\frac1{1+\sqrt{d}}=\frac{\sqrt{d}}{1+\sqrt{d}}
\end{equation*}%
which is equal to $\underline{\alpha}(d)$.
\end{proof}

For $d>\frac1{3+2\sqrt{2}}$ the formula for $\underline{\alpha}(d)$ is more
complex.

\begin{lemma}
If $\frac1{3+2\sqrt{2}}\le d\le \frac12$, then the value
\begin{equation*}
\underline{\alpha}(d)=\frac{1+d}3+\frac{\sqrt[3]{2}\cdot R}{6}+\frac{%
2d^2-8d-1}{3\sqrt[3]{2}\cdot R}
\end{equation*}
where
\begin{equation*}
R=\sqrt[3]{4d^3-24d^2+21d-5+3\sqrt{3}\sqrt{1-8d^3+39d^2-6d}}
\end{equation*}
can be found as the unique real solution of the qubic equation
\begin{equation*}
2(x-1)^3+(4-2d)(x-1)^2+3(x-1)+1=0.
\end{equation*}
\end{lemma}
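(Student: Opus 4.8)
The plan is to determine which $(*)$-function realizes the prescribed depth $-d$ for $d$ in the given range, and then to recover its critical point as the root of the displayed cubic. First I would show that for $d\in\big[\frac1{3+2\sqrt2},\frac12\big]$ the relevant function has index $n=2$, namely
$$g_\gamma(x)=-x+\gamma x^2+\sum_{k=3}^\infty x^k=\frac1{1-x}-1-2x+(\gamma-1)x^2,\qquad\gamma\in[-1,1].$$
Since $\partial g_\gamma(x)/\partial\gamma=x^2>0$, the minimum $\min_{[0,1)}g_\gamma$ is continuous and strictly increasing in $\gamma$. At $\gamma=1$ one recovers the function $-x+\sum_{k=2}^\infty x^k$ of depth $-\frac1{3+2\sqrt2}$ already treated in Lemma~\ref{bound}, while at $\gamma=-1$ the critical-point equation reduces to $4(1-x)^3-6(1-x)^2+1=0$, whose root $x=\frac12$ yields depth $-\frac12$. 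Hence as $\gamma$ runs over $[-1,1]$ the depth sweeps out exactly $\big[\frac1{3+2\sqrt2},\frac12\big]$, and by the uniqueness of the $(*)$-function of given depth this is indeed the correct family.

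Next I would impose the two equations defining $\underline{\alpha}(d)$ on $g_\gamma$: the critical-point condition $g_\gamma'(x)=\frac1{(1-x)^2}-2+2(\gamma-1)x=0$ and the depth condition $g_\gamma(x)=-d$. Solving each for $\gamma-1$ and equating eliminates $\gamma$; clearing the denominator $(1-x)^2$ and substituting $t=x-1$ collapses the relation to
$$2t^3+(4-2d)t^2+3t+1=0,$$
which is precisely the asserted cubic $2(x-1)^3+(4-2d)(x-1)^2+3(x-1)+1=0$. Thus $\underline{\alpha}(d)$ is a root of this cubic.

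Finally I would solve the cubic by Cardano's method: the shift $t=s-\frac{2-d}3$ depresses it to $s^3+ps+q=0$ with $p,q$ polynomials in $d$, and the real branch
$$\underline{\alpha}(d)=\frac{1+d}3+\frac{\sqrt[3]2\,R}6+\frac{2d^2-8d-1}{3\sqrt[3]2\,R}$$
results once one writes the two Cardano cube roots and uses $\sqrt[3]{A}\sqrt[3]{B}=-p/3$ to fuse them, with $R$ the displayed radical. To secure uniqueness of the real root I would check that the discriminant is of the one-real-root sign on the interval, i.e. that the inner radicand $1-8d^3+39d^2-6d$ stays positive (it equals $\frac{27}4$ at $d=\frac12$), so that $R$ is real and the remaining two roots are a complex-conjugate pair; evaluating the formula at the endpoints (where it returns $\underline{\alpha}(\tfrac12)=\tfrac12$ and matches the value of Lemma~\ref{bound} at $d=\frac1{3+2\sqrt2}$) pins down the correct branch. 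The main obstacle is entirely computational—carrying the Cardano reduction through and matching the radicals and the coefficient $\frac{2d^2-8d-1}{3\sqrt[3]2R}$ to the stated form, together with the sign analysis of $1-8d^3+39d^2-6d$; the conceptual steps, that $n=2$ and that eliminating $\gamma$ produces the cubic, are the easy part.
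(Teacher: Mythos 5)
Your proposal is correct and follows essentially the same route as the paper's proof: identify the relevant $(*)$-family as $g(x)=-x+\gamma x^2+\sum_{k=3}^\infty x^k$ via the endpoint depths $-\frac1{3+2\sqrt2}$ and $-\frac12$, eliminate $\gamma$ using the critical-point equation $g'(x)=0$ together with $g(x)=-d$ to obtain the cubic in $x-1$, and then apply Cardano. The only difference is that you spell out the monotonicity-in-$\gamma$ argument and the discriminant/branch check, which the paper leaves implicit.
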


\begin{proof}
Since the minimal values of the $(*)$-functions $g_1(x)=-x+\sum_{k=2}^\infty
x^k$ and $g(x)=-x-x^2+\sum_{k=3}^\infty x^k$ are equal to $-\frac1{3+2\sqrt{2%
}}$ and $-\frac12$, respectively, for $d\in\big[\frac1{3+2\sqrt{2}},\frac12%
\big]$ the number $\underline{\alpha}(d)$ is equal to the critical point of
a unique $(*)$-function
\begin{equation*}
g(x)=-x+\gamma x^2+\sum_{k=3}^\infty x^k=-1-2x+(\gamma-1)x^2+\frac1{1-x}
\end{equation*}
with $\min_{[0,1)}g=-d$. At the critical point $x$ the derivative of $g$
equals zero:
\begin{equation*}
0=g^{\prime}(x)=-2+2(\gamma-1)x+\frac1{(1-x)^2}
\end{equation*}%
which implies that
\begin{equation*}
\gamma-1=\frac1{2x}\Big(2-\frac1{(1-x)^2}\Big)=\frac{2x^2-4x+1}{2x(1-x)^2}.
\end{equation*}
After substitution of $\gamma-1$ to the formula of the function $g(x)$, we
get
\begin{equation*}
-d=-1-2x-\frac{2x^3-4x^2+x}{2(1-x)^2}+\frac1{1-x}.
\end{equation*}
This equation is equivalent to the qubic equation
\begin{equation*}
2(x-1)^3+(4-2d)(x-1)^2+3(x-1)+1=0.
\end{equation*}%
Solving this equation with the Cardano formulas we can get the solution $%
\underline{\alpha}(d)$ written in the lemma.
\end{proof}

\begin{remark}
Calculating the value $\underline{\alpha}(d)$ for some concrete numbers $d$,
we get
\begin{equation*}
\underline{\alpha}(\tfrac15)\approx 0.32482,\;\;\underline{\alpha}(\tfrac
14)\approx 0.37097, \; \underline{\alpha}(\tfrac13)\approx 0.42773,\;%
\underline{\alpha}(\tfrac12)=0.5 .
\end{equation*}
\end{remark}

Theorem~\ref{soloma} and Lemma~\ref{bound} imply:

\begin{corollary}
\label{cor8} Let $\Sigma\subset\mathbb{R}$ be a finite subset containing
more than three points and $d=\delta(\Sigma)/\mathrm{diam}(\Sigma)$. If $%
d\le \frac1{3+2\sqrt{2}}$ and $\frac{\sqrt{d}}{1+\sqrt{d}}>\frac1{|\Sigma|}$%
, then for almost all $q$ in the interval $\big(\frac1{|\Sigma|},\frac{\sqrt{%
d}}{1+\sqrt{d}}\big)$ the self-similar set $K(\Sigma;q)$ has positive
Lebesgue measure and the set $K(\Sigma;\sqrt{q})$ contains an interval.
\end{corollary}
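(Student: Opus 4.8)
The plan is to obtain the corollary by chaining together the three preceding ingredients: the lower bound $\alpha(\Sigma)\ge\underline{\alpha}(d)$ established above for every finite $\Sigma$ with $|\Sigma|\ge 2$, the explicit evaluation $\underline{\alpha}(d)=\frac{\sqrt{d}}{1+\sqrt{d}}$ from Lemma~\ref{bound}, and Solomyak's Theorem~\ref{soloma}. All of the analytic content is already packaged in these results, so the proof reduces to checking a short chain of inequalities which simultaneously (a) makes Theorem~\ref{soloma} applicable and (b) exhibits the interval in the statement as a subinterval of the one produced by that theorem.

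First I would apply the lower-bound lemma: since $|\Sigma|\ge 2$, we have $\alpha(\Sigma)\ge\underline{\alpha}(d)$ with $d=\delta(\Sigma)/\mathrm{diam}(\Sigma)$. The hypothesis $d\le\frac1{3+2\sqrt{2}}$ lets me invoke Lemma~\ref{bound} to rewrite $\underline{\alpha}(d)=\frac{\sqrt{d}}{1+\sqrt{d}}$, so that $\alpha(\Sigma)\ge\frac{\sqrt{d}}{1+\sqrt{d}}$. Combining this with the remaining hypothesis $\frac{\sqrt{d}}{1+\sqrt{d}}>\frac1{|\Sigma|}$ gives the key chain $\frac1{|\Sigma|}<\frac{\sqrt{d}}{1+\sqrt{d}}=\underline{\alpha}(d)\le\alpha(\Sigma)$. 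In particular $\frac1{|\Sigma|}<\alpha(\Sigma)$, so Theorem~\ref{soloma} applies and yields that for almost all $q\in\big(\frac1{|\Sigma|},\alpha(\Sigma)\big)$ the set $K(\Sigma;q)$ has positive Lebesgue measure and $K(\Sigma;\sqrt{q})$ contains an interval. Since $\frac{\sqrt{d}}{1+\sqrt{d}}\le\alpha(\Sigma)$, the interval $\big(\frac1{|\Sigma|},\frac{\sqrt{d}}{1+\sqrt{d}}\big)$ named in the corollary is contained in $\big(\frac1{|\Sigma|},\alpha(\Sigma)\big)$; as the exceptional set where the conclusion fails is one fixed set intersected with the respective interval, and a null subset of a null set is null, the ``almost all'' assertion descends to the smaller interval, finishing the proof.

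Two bookkeeping points deserve a word. The stated hypothesis that $\Sigma$ contain ``more than three points'' is in fact automatic: from $d\le\frac1{3+2\sqrt{2}}=3-2\sqrt{2}$ one gets $\sqrt{d}\le\sqrt{2}-1$ and hence $\frac{\sqrt{d}}{1+\sqrt{d}}\le 1-\frac1{\sqrt{2}}<\frac13$, so $\frac1{|\Sigma|}<\frac{\sqrt{d}}{1+\sqrt{d}}<\frac13$ already forces $|\Sigma|\ge 4$; thus this assumption is harmless and merely records the range in which the target interval is nonempty. I do not expect any genuine obstacle in this argument, since the hard work lives entirely inside Theorem~\ref{soloma} (Solomyak's result, which we are free to assume) and inside the computation of $\underline{\alpha}(d)$ carried out in Lemma~\ref{bound}. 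The only substantive step is recognizing the inequality chain $\frac1{|\Sigma|}<\frac{\sqrt{d}}{1+\sqrt{d}}=\underline{\alpha}(d)\le\alpha(\Sigma)$, which supplies both the applicability of Solomyak's theorem and the required interval inclusion in one stroke.
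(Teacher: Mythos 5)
Your argument is correct and is exactly the route the paper takes: the corollary is stated there as an immediate consequence of Theorem~\ref{soloma}, the lower bound $\alpha(\Sigma)\ge\underline{\alpha}(d)$, and Lemma~\ref{bound}, via the same chain $\frac1{|\Sigma|}<\frac{\sqrt{d}}{1+\sqrt{d}}=\underline{\alpha}(d)\le\alpha(\Sigma)$ and restriction of the ``almost all'' statement to the subinterval. Your side remark that $d\le\frac1{3+2\sqrt 2}$ together with $\frac1{|\Sigma|}<\frac{\sqrt d}{1+\sqrt d}$ already forces $|\Sigma|\ge 4$ is a correct observation not made explicit in the paper.
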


\begin{remark}
Theorem~\ref{th3} says that for $q\in \lbrack i(\Sigma ),1)$ the set $%
K(\Sigma ;q)$ contains an interval. By Theorem~\ref{soloma} under certain
conditions the same is true for almost all $q\in \big[\frac{1}{\sqrt{|\Sigma
|}},\sqrt{\alpha (\Sigma )}\big)$. Let us remark that the numbers $i(\Sigma
) $ and $\frac{1}{\sqrt{|\Sigma |}}$ are incomparable in general. Indeed,
for the multigeometric sequence $(1,\dots ,1;q)$ containing $k>1$ units the
set $\Sigma =\{0,\dots ,k\}$ has
\begin{equation*}
i(\Sigma )=I(\Sigma )=\frac{1}{k+1}=\frac{1}{|\Sigma |}<\frac{1}{\sqrt{%
|\Sigma |}}.
\end{equation*}%
On the other hand, for the multigeometric sequence $(3^{k-1},3^{k-2},\dots
,3,1;q)$ the set $\Sigma =\{\sum_{n=0}^{k-1}3^{n}\varepsilon
_{n}:(\varepsilon _{n})_{n<k}\in \{0,1\}^{k}\}$ has cardinality $|\Sigma
|=2^{k}$, diameter $\mathrm{diam}(\Sigma )=(3^{k}-1)/2$, $d=\frac{\delta
(\Sigma )}{\mathrm{diam}(\Sigma )}=\frac{2}{3^{k}-1}$ and $i(\Sigma
)=I(\Sigma )=\frac{1}{4}+\frac{1}{4\cdot 3^{k-1}}>\frac{1}{\sqrt{2}^{k}}=%
\frac{1}{\sqrt{|\Sigma |}}$. Corollary~\ref{cor8} guarantees that for almost
all $q\in \big(\frac{1}{\sqrt{2}^{k}},\frac{\sqrt[4]{d}}{\sqrt{1+\sqrt{d}}}%
\big)$ the set $K(\Sigma ;q)$ contains an interval.
\end{remark}

Multigeometric sequences of the form%
\begin{equation*}
\left( k+m,\dots ,k+1,k;q\right)
\end{equation*}%
with $m\geq k$ we will call, after \cite{P-W}, \emph{Ferens-like sequences}.
The achievement set $E\left( x\right) $\ for a Ferens-like sequence
coincides with the self-similar set $K(\Sigma ;q)$\ for the set
\begin{equation*}
\Sigma =\{0,k,k+1,\dots ,n-k,n\}\text{.}
\end{equation*}%
where $n=(m+1)(2k+m)/2$.\ Sets $K\left( \Sigma ;q\right) $ with $\Sigma $ of
this form will be called \emph{Ferens-like fractals}.

Note that Guthrie-Nymann-Jones sequence of rank $m$ generates a Ferens-like
fractal (with $\Sigma =\{0,2,3,\dots ,2m+1,2m+3\}$. There are also
Ferens-like fractals which are not originated by any multigeometric sequence
(for example $K(\Sigma ;q)$ with $\Sigma =\left\{ 0,4,5,6,7,11\right\} $).
However, as an easy consequence of the main theorem of \cite{NS}, we obtain
for Ferens-like fractals \textquotedblleft trichotomy" analogous to that
formulated in Theorem \ref{Guthrie Nymann}. Moreover, some theorems
formulated for multigeometric sequences are in fact proved for $K(\Sigma ;q)$
(see for example Theorem 2 in \cite{BFS}).

\begin{example}
\label{ex9} For the Ferens-like sequence $x_{q}=(4,3,2;q)$ we get $\Sigma
=\{0,2,3,4,5,6,7,9\}$,
\begin{equation*}
d=\frac{\delta (\Sigma )}{\mathrm{diam}(\Sigma )}=\frac{1}{9}<\frac{1}{3+2%
\sqrt{2}}\mbox{ \ and \ }\frac{\sqrt{d}}{1+\sqrt{d}}=\frac{1}{4}>\frac{1}{6}%
=i(\Sigma ).
\end{equation*}%
By Corollary~\ref{cor8} (and Theorem~\ref{th3}), for almost all numbers $%
q\in \big(\frac{1}{8},1\big)$ the achievement set $E(x_{q})=K(\Sigma ;q)$
has positive Lebesgue measure (for $q<\frac{2}{11}=I(\Sigma )$ it is not a
finite union of intervals). By Theorem~\ref{th3}, for any $q\in \lbrack
i(\Sigma ),I(\Sigma ))=[\frac{1}{6};\frac{2}{11})$ the set $K(\Sigma ;q)$ is
a Cantorval. The structure of the sets $E(x_{q})=K(\Sigma ;q)$ is described
in the diagram:

\setlength{\unitlength}{1mm}
\begin{picture}(56,23)(-30,0)
\put(0,12){\vector(1,0){74}}
\put(5,15){$\mathcal{C}_0$}
\put(12,7){$\frac{1}{8}$}
\put(21,15){$\lambda^+$}
\put(32,7){$\frac16$}
\put(40,15){$\MC$}
\put(51,7){$\frac2{11}$}
\put(63,15){$\mathcal{I}$}
\multiput(13,12)(20,0){3}{\circle*{1}}
\end{picture}\smallskip

More generally, for any Ferens-like fractal, $|\Sigma |=n-2k+3$, $\Delta
(\Sigma )=k$, $\delta \left( \Sigma \right) =1$, $I(\Sigma )=\frac{k}{n+k}$,
$i(\Sigma )=\min \big(\frac{1}{\left\vert \Sigma \right\vert -2},I(\Sigma )%
\big)$ and $d=\frac{1}{n}$. Moreover, if $n\geq 7$ then $\underline{\alpha }%
(d)=\frac{1}{\sqrt{n}+1}$. Therefore, one can check that for any Ferens-like
sequence we have $\underline{\alpha }(d)>i(\Sigma )$, and we can draw an
analogous diagram. The same result we can obtain for any Ferens-like fractal
with $k=2$ (even if it is not originated by any Ferens-like sequence).
However, there are Ferens-like fractals with $\underline{\alpha }%
(d)<i(\Sigma )$ (for example $K(\Sigma ;q)$ with $\Sigma =\left\{
0,3,4,7\right\} $ or $\Sigma =\left\{ 0,4,5,6,7,11\right\} $).
\end{example}

\begin{example}
\label{ex9a} For the Guthrie-Nymann-Jones sequence $x_{q}=(3,2,\dots ,2;q)$
of rank $m\geq 2$ we get $\Sigma =\{0,2,3,\dots ,2m+1,2m+3\}$, $|\Sigma
|=2m+2$, $I(\Sigma )=\frac{2}{2m+5}$, $i(\Sigma)=\min \big\{\frac{1}{2m},%
\frac{2}{2m+5}\big\}$, $d=\frac{1}{2m+3}$ and $\underline{\alpha }(d)=1/(1+%
\sqrt{2m+3})$. Moreover, we have $d<\frac{1}{3+2\sqrt{2}}$ and $\underline{%
\alpha }(d)\geq i(\Sigma )>\frac{1}{2m+2}=\frac{1}{|\Sigma |}$. So, we can
apply Corollary~\ref{cor8} and conclude that for almost all numbers $q\in %
\big(\frac{1}{2m+2},\frac{1}{2m}\big)$ the self-similar set $K(\Sigma ;q)$
has positive Lebesgue measure. By Theorem~\ref{th3}, for any $q\in \lbrack
i(\Sigma ),\frac{2}{2m+5})$ the set $K(\Sigma;q)$ is a Cantorval and for all
$q\in \lbrack \frac{2}{2m+5},1)$ it is an interval.\newline
For $m=1$ we obtain $\underline{\alpha }(d)=\underline{\alpha }(\frac{1}{5})>%
\frac{2}{7}$. Therefore, for almost all numbers $q\in \big(\frac{1}{4},\frac{%
2}{7}\big)$ the set $K(\Sigma ;q)$ has positive Lebesgue measure.
\end{example}

\section{Self-similar sets of zero Lebesgue measure}

\label{s:null}

The results of the preceding section yields conditions under which for
almost all $q$ in an interval $\big[\frac{1}{|\Sigma|},\alpha(\Sigma)\big)$
the set $K( \Sigma ;q) $ has positive Lebesgue measure. In this section we
shall show that this interval can contain infinitely many numbers $q$ with $%
\lambda (K(\Sigma ;q)) =0$ thus proving the statements (5) and (6) of
Theorem~\ref{main}.

\begin{theorem}
\label{th9}If there exists $n\in \mathbb{N}$ such that
\begin{equation*}
\Big\vert \sum_{i=0}^{n-1}q^{i}\Sigma \Big\vert \cdot q^{n}<1
\end{equation*}
then the set $K(\Sigma ,q)$ has measure zero.
\end{theorem}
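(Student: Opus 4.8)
The plan is to reduce the statement to a direct covering estimate by exploiting the block structure of the defining series. First I would observe that $\sum_{i=0}^{n-1}q^i\Sigma$ is exactly the set $\Sigma_n=\big\{\sum_{i=0}^{n-1}a_iq^i:(a_i)_{i=0}^{n-1}\in\Sigma^n\big\}$ appearing in statement~(5) of Theorem~\ref{main}, and that grouping the series $\sum_{k=0}^\infty a_kq^k$ into consecutive blocks of length $n$ yields
\begin{equation*}
\sum_{k=0}^\infty a_kq^k=\sum_{j=0}^\infty(q^n)^j\Big(\sum_{i=0}^{n-1}a_{jn+i}q^i\Big),
\end{equation*}
where each inner sum lies in $\Sigma_n$. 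Since every sequence in $\Sigma_n^\omega$ is realised by an appropriate choice of $(a_k)_{k\in\omega}\in\Sigma^\omega$, and conversely every such choice produces a sequence in $\Sigma_n^\omega$, this gives the identity $K(\Sigma;q)=K(\Sigma_n;q^n)$. Thus it suffices to prove the theorem in the case $n=1$, that is, to show that $|\Sigma|\,q<1$ implies $\lambda(K(\Sigma;q))=0$, and then apply this to the pair $(\Sigma_n,q^n)$, for which the hypothesis becomes $|\Sigma_n|\cdot q^n<1$.

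For the case $n=1$ I would use the self-similar decomposition $K=K(\Sigma;q)=\bigcup_{\sigma\in\Sigma}(\sigma+qK)$ and iterate it $m$ times. This expresses $K$ as a union of at most $|\Sigma|^m$ translates of $q^mK$, each contained in an interval of length $q^m\cdot\diam K$. Summing these lengths gives
\begin{equation*}
\lambda(K)\le|\Sigma|^m\cdot q^m\cdot\diam K=(|\Sigma|\,q)^m\cdot\diam K.
\end{equation*}
Since $|\Sigma|\,q<1$, the right-hand side tends to $0$ as $m\to\infty$, forcing $\lambda(K)=0$.

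I expect the covering estimate itself to be entirely routine; the only point requiring care is the reduction $K(\Sigma;q)=K(\Sigma_n;q^n)$, where I would justify the regrouping of the (geometrically, hence absolutely, convergent) series and verify that the passage from $\Sigma^\omega$ to $\Sigma_n^\omega$ is surjective in both directions, so that no subsums are gained or lost when replacing $(\Sigma,q)$ by $(\Sigma_n,q^n)$. Once this identity is established, the measure bound is immediate, so the main—and only modest—obstacle is the bookkeeping of the block decomposition rather than any genuine analytic difficulty.
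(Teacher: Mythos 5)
Your proposal is correct and follows essentially the same route as the paper: both rest on the block decomposition $K(\Sigma;q)=\Sigma_n+q^nK(\Sigma;q)$, i.e.\ on writing $K$ as a union of $|\Sigma_n|$ translates of $q^nK$. The only cosmetic difference is that the paper closes the argument in one step, from $\lambda(K)\le|\Sigma_n|\cdot q^n\cdot\lambda(K)<\lambda(K)$ (using that $\lambda(K)<\infty$), whereas you iterate the decomposition and let the covering bound $(|\Sigma_n|\,q^n)^m\cdot\mathrm{diam}\,K$ tend to $0$.
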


\begin{proof}
Denote $K:=K(\Sigma ,q)$. From the equality $K=\Sigma +qK$ we obtain, by
induction, that
\begin{equation*}
K=\sum_{i=0}^{n-1}q^{i}\Sigma +q^{n}K.
\end{equation*}%
Let $\Sigma _{n}=\sum_{i=0}^{n-1}q^{i}\Sigma $. If $|\Sigma _{n}|\cdot
q^{n}<1$, then
\begin{equation*}
\lambda (K)\leq |\Sigma _{n}|\cdot q^{n}\cdot \lambda (K)<1\cdot \lambda (K)
\end{equation*}%
which is possible only if $\lambda (K)=0$.
\end{proof}

To use the latter theorem we need a technical lemma:

\begin{lemma}
\label{l2}For any integer numbers $s>1$ and $n>1$ the unique positive
solution $q$ of the equation%
\begin{equation}
x+x^{2}+\dots+x^{n-1}=\frac{1}{s-1}  \label{1}
\end{equation}%
is greater than $\frac{1}{s}$. Moreover, there is $n_{0}\in\mathbb{N}$ such
that for any $n>n_{0}$\
\begin{equation}
\left( s^{n}-2^{n-1}\right) \cdot q^{n}<1\text{.}  \label{2}
\end{equation}

\begin{proof}
Clearly
\begin{equation*}
\sum_{i=1}^{n-1}\left( \frac{1}{s}\right) ^{i}=\frac{1}{s-1}\cdot \left( 1-%
\frac{1}{s^{n-1}}\right) <\frac{1}{s-1}\text{,}
\end{equation*}%
so $q>\frac{1}{s}$. From the equality
\begin{equation*}
\frac{1}{s-1}=\sum_{i=1}^{n-2}\left( \frac{1}{s}\right) ^{i}+\frac{1}{\left(
s-1\right) s^{n-2}}
\end{equation*}%
we obtain%
\begin{equation*}
q^{n-1}=\frac{1}{s-1}-\sum_{i=1}^{n-2}q^{i}<\frac{1}{s-1}-\sum_{i=1}^{n-2}%
\left( \frac{1}{s}\right) ^{i}=\frac{1}{\left( s-1\right) s^{n-2}}\text{.}
\end{equation*}%
Using the latter inequality and the equality%
\begin{equation*}
\frac{1}{s-1}=\frac{q-q^{n}}{1-q}
\end{equation*}%
we have%
\begin{equation*}
\frac{1-q}{s-1}=q\left( 1-q^{n-1}\right) >q\left( 1-\frac{1}{\left(
s-1\right) s^{n-2}}\right) \text{.}
\end{equation*}%
Therefore,%
\begin{equation*}
1-q>\left( s-1\right) q-\frac{q}{s^{n-2}}
\end{equation*}%
(which means that $sq-\frac{q}{s^{n-2}}<1$)\ and finally%
\begin{equation}
q<\frac{1}{s\left( 1-\frac{1}{s^{n-1}}\right) }\text{.}  \label{3}
\end{equation}%
From Bernoulli's inequality it follows that%
\begin{equation*}
\left( 1-\frac{1}{s^{n-1}}\right) ^{n}\geq 1-\frac{n}{s^{n-1}}
\end{equation*}%
and, by (\ref{3}), we have%
\begin{equation*}
q^{n}<\frac{1}{s^{n}\cdot \left( 1-\frac{n}{s^{n-1}}\right) }\text{.}
\end{equation*}%
Consequently,%
\begin{equation*}
\left( s^{n}-2^{n-1}\right) \cdot q^{n}<\frac{s^{n}\cdot \left( 1-\frac{%
2^{n-1}}{s^{n}}\right) }{s^{n}\cdot \left( 1-\frac{n}{s^{n-1}}\right) }
\end{equation*}%
Obviously, for $n$ greater then some $n_{0}$\
\begin{equation*}
\frac{2^{n-1}}{s}>n
\end{equation*}%
and hence%
\begin{equation*}
\frac{2^{n-1}}{s^{n}}>\frac{n}{s^{n-1}}
\end{equation*}%
which proves (\ref{2}).
\end{proof}
\end{lemma}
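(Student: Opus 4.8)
The plan is to handle the two assertions separately, since the first is a one-line monotonicity argument while the second needs a genuinely sharp upper bound on the root $q$. First I would note that $f(x)=x+x^{2}+\dots+x^{n-1}$ is strictly increasing on $(0,\infty)$ with $f(0)=0$ and $\lim_{x\to\infty}f(x)=\infty$, so the equation $f(x)=\frac1{s-1}$ has a unique positive solution $q$. To obtain $q>\frac1s$ it then suffices to evaluate $f$ at $\frac1s$ and compare: summing the finite geometric progression gives
\begin{equation*}
f\Big(\tfrac1s\Big)=\sum_{i=1}^{n-1}\frac1{s^{i}}=\frac1{s-1}\Big(1-\frac1{s^{n-1}}\Big)<\frac1{s-1}=f(q),
\end{equation*}
and monotonicity of $f$ immediately yields $q>\frac1s$.

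For the second claim the real work is to bound $q$ from above tightly enough that $q^{n}$ outweighs the factor $s^{n}-2^{n-1}$. The step I would isolate as the crux is the estimate $q^{n-1}<\frac1{(s-1)s^{n-2}}$. To get it I would rewrite the defining equation as $q^{n-1}=\frac1{s-1}-\sum_{i=1}^{n-2}q^{i}$ and use $q>\frac1s$ (just proved) to bound the tail below by its geometric counterpart $\sum_{i=1}^{n-2}s^{-i}=\frac1{s-1}-\frac1{(s-1)s^{n-2}}$. Substituting this into the closed form $\frac{1-q}{s-1}=q(1-q^{n-1})$ of the defining equation, I would arrive at $sq-\frac{q}{s^{n-2}}<1$, that is, the clean upper bound
\begin{equation*}
q<\frac1{s\big(1-\frac1{s^{n-1}}\big)}.
\end{equation*}

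The endgame is then routine. Raising this bound to the $n$-th power and applying Bernoulli's inequality $\big(1-\frac1{s^{n-1}}\big)^{n}\ge 1-\frac{n}{s^{n-1}}$ gives $q^{n}<\frac1{s^{n}(1-n/s^{n-1})}$, whence
\begin{equation*}
(s^{n}-2^{n-1})q^{n}<\frac{1-2^{n-1}/s^{n}}{1-n/s^{n-1}},
\end{equation*}
and this last quantity is $<1$ exactly when $\frac{2^{n-1}}{s^{n}}>\frac{n}{s^{n-1}}$, i.e.\ $\frac{2^{n-1}}{s}>n$, which holds for all $n$ beyond some $n_{0}=n_{0}(s)$ since $2^{n-1}$ eventually dominates $sn$. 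I expect the main obstacle to be the derivation of the upper bound on $q$: one must invoke $q>\frac1s$ in exactly the right direction when estimating the tail $\sum q^{i}$, and must also check that $1-n/s^{n-1}$ stays positive for large $n$ so that the Bernoulli estimate is not applied to a negative denominator. Everything else is elementary algebra together with the standard exponential-beats-linear comparison.
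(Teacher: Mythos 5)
Your proposal is correct and follows essentially the same route as the paper's own proof: the comparison $f(1/s)<\frac{1}{s-1}$ for the lower bound, the tail estimate $q^{n-1}<\frac{1}{(s-1)s^{n-2}}$ leading to $q<\frac{1}{s(1-s^{1-n})}$, Bernoulli's inequality, and the final comparison $\frac{2^{n-1}}{s}>n$. Your added remark that one must check $1-n/s^{n-1}>0$ for large $n$ is a sensible point the paper leaves implicit.
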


\begin{theorem}
\label{Th2}If a finite subset $\Sigma\subset\mathbb{R}$ contains the set $%
\{a,a+1,b+1,c+1,b+|\Sigma|,c+|\Sigma|\}$ for some real numbers $a,b,c$ with $%
b\ne c$, then there is a decreasing sequence $(q_{n})_{n=1}^{\infty }$
tending to $\frac{1}{|\Sigma|}$\ such that, for any $n\in\mathbb{N}$, the
self-similar set $K(\Sigma,q_{n})$ has Lebesgue measure zero.
\end{theorem}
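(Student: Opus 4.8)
The plan is to combine the measure-zero criterion of Theorem~\ref{th9} with the two conclusions of Lemma~\ref{l2}, using the arithmetic pattern $\{a,a+1,b+1,c+1,b+|\Sigma|,c+|\Sigma|\}\subset\Sigma$ to manufacture many coincidences among the subset sums. Write $s=|\Sigma|$ and, for each integer $n>1$, let $q_n$ be the unique positive root of
\[
x+x^2+\dots+x^{n-1}=\frac{1}{s-1}
\]
supplied by Lemma~\ref{l2}. First I would record the elementary features of $(q_n)$: since adjoining the term $x^{n-1}$ strictly increases the left side on $(0,1)$, comparing the defining equations for consecutive indices shows $(q_n)$ is strictly decreasing; Lemma~\ref{l2} gives $q_n>\frac1s$ for every $n$; and from the closed form $\frac{q_n-q_n^{\,n}}{1-q_n}=\frac{1}{s-1}$ together with $q_n^{\,n}\to0$ one gets $\frac{q_*}{1-q_*}=\frac1{s-1}$ for the limit $q_*$, that is, $\lim_n q_n=\frac1s=\frac1{|\Sigma|}$. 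So $(q_n)$ already has the required shape; it remains to verify the measure-zero assertion for large $n$.

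The heart of the matter is the counting bound
\[
|\Sigma_n|=\Big|\sum_{i=0}^{n-1}q_n^{\,i}\Sigma\Big|\le s^n-2^{n-1}.
\]
The defining relation of $q_n$ is exactly what is needed, since it rewrites as $(s-1)\sum_{i=1}^{n-1}q_n^{\,i}=1$. This identity lets me trade a jump of size $1$ at coordinate $0$ for jumps of size $s-1$ at coordinates $1,\dots,n-1$: for every $t=(t_1,\dots,t_{n-1})\in\{b,c\}^{n-1}$ one has
\[
(a+1)+\sum_{i=1}^{n-1}(t_i+1)\,q_n^{\,i}
   = a+\sum_{i=1}^{n-1}(t_i+s)\,q_n^{\,i},
\]
because the difference of the two sides is $1-(s-1)\sum_{i=1}^{n-1}q_n^{\,i}=0$, and every coordinate used (namely $a+1,a$ and $t_i+1,t_i+s$) lies in $\Sigma$. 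Let $A_t$ be the left-hand coefficient tuple and $B_t$ the right-hand one. Since $b\ne c$, distinct $t$ give distinct $A_t$ (the coordinate $t_i+1\in\{b+1,c+1\}$ recovers $t_i$), so there are exactly $2^{n-1}$ tuples $A_t$; and each $B_t$ carries the value $a\ne a+1$ in coordinate $0$, hence lies outside $\{A_{t'}\}$. Thus every value realized by some $A_t$ is re-realized by a tuple outside the family $\{A_t:t\in\{b,c\}^{n-1}\}$, so deleting those $2^{n-1}$ tuples from $\Sigma^n$ leaves the set of subset sums unchanged, giving $|\Sigma_n|\le s^n-2^{n-1}$.

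Finally I would feed this into the measure criterion. By the second conclusion of Lemma~\ref{l2} there is $n_0$ with $(s^n-2^{n-1})\,q_n^{\,n}<1$ for all $n>n_0$; combined with the counting bound this yields $|\Sigma_n|\cdot q_n^{\,n}<1$, and Theorem~\ref{th9} then gives $\lambda\big(K(\Sigma;q_n)\big)=0$. Discarding the first $n_0$ terms, the tail $(q_n)_{n>n_0}$ is the desired strictly decreasing sequence converging to $\frac1{|\Sigma|}$ along which $K(\Sigma;q_n)$ is null.

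The step I expect to be the main obstacle is the counting bound $|\Sigma_n|\le s^n-2^{n-1}$: one must be certain that the $2^{n-1}$ collisions $A_t\leftrightarrow B_t$ genuinely remove $2^{n-1}$ distinct points from the naive count $s^n$, i.e.\ that the $A_t$ are pairwise distinct (this is where the hypothesis $b\ne c$ enters) and are truly disjoint from the tuples re-realizing their values (guaranteed by the contrast $a$ versus $a+1$ in coordinate $0$). Everything else — the monotonicity and limit of $(q_n)$ and the estimate $(s^n-2^{n-1})q_n^{\,n}<1$ — is handed over to Lemma~\ref{l2}.
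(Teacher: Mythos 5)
Your proposal is correct and follows essentially the same route as the paper: the same roots $q_n$ of $x+\dots+x^{n-1}=\tfrac1{s-1}$ from Lemma~\ref{l2}, the same collision identity trading $a\mapsto a+1$ against $\varepsilon_i\mapsto\varepsilon_i+(s-1)$ with $\varepsilon_i\in\{b+1,c+1\}$ to get $|\Sigma_n|\le s^n-2^{n-1}$, and the same appeal to Lemma~\ref{l2} and Theorem~\ref{th9} to conclude. Your explicit verification that the $2^{n-1}$ tuples $A_t$ are pairwise distinct and disjoint from the tuples $B_t$ realizing the same sums is a detail the paper leaves implicit, but the argument is identical in substance.
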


\begin{proof}
Let $s=|\Sigma|$ and for every $n$ denote by $q_{n}$ the unique positive
solution of the equation (\ref{1}) from Lemma \ref{l2}. Let $n_{0}$ be a
natural number such that%
\begin{equation*}
\left( s^{n}-2^{n-1}\right) \cdot \left( q_{n}\right) ^{n}<1
\end{equation*}%
for any $n>n_{0}$. Clearly $(q_{n}) _{n=n_{0}}^{\infty }$ is a decreasing
sequence and $\lim_{n\rightarrow \infty }q_{n}=\frac{1}{s}$. It suffices to
show that $K(\Sigma ,q)$ has measure zero for $n>n_{0}$.\newline
Taking into account that each $q_{n}$ is a solution of (\ref{1}), we
conclude that
\begin{equation*}
a+\sum_{i=1}^{n-1}(s-1+\varepsilon _{i})( q_{n})
^{i}=(a+1)+\sum_{i=1}^{n-1}\varepsilon _{i}( q_{n}) ^{i}
\end{equation*}%
for any $\varepsilon _{i}\in \{b+1,c+1\}\subset\Sigma$. Therefore%
\begin{equation*}
\left\vert \sum_{i=1}^{n-1}\left( q_{n}\right) ^{i}\Sigma \right\vert \leq
s^{n}-2^{n-1}\text{.}
\end{equation*}%
Hence, by Lemma \ref{l2},
\begin{equation*}
\left\vert \sum_{i=1}^{n-1}\left( q_{n}\right) ^{i}\Sigma \right\vert \cdot
\left( q_{n}\right) ^{n}<1.
\end{equation*}%
and we can apply Theorem \ref{th9} to conclude that $K(\Sigma ,q)$ has
Lebesgue measure zero.
\end{proof}

The condition
\begin{equation}
\big\{a,a+1,b+1,c+1,b+|\Sigma|,c+|\Sigma|\big\} \subset \Sigma  \tag{$*$}
\label{gw}
\end{equation}%
looks a bit artificial but it can be easily verified for many sumsets $%
\Sigma $ of multigeometric sequences.

In particular, for the Guthrie-Nymann-Jones sequence of rank $m\geq 1$
\begin{equation*}
x_{q}=(3,2,\dots ,2;q),
\end{equation*}%
the sumset $\Sigma =\{0,2,3,\dots ,2m+1,2m+3\}$ has cardinality $|\Sigma
|=2m+2$. Observe that for the set $\Sigma $ the condition $(\ast )$ holds
for $a=2$, $b=1$ and $c=-1$. Because of that Theorem~\ref{Th2} yields a
sequence $(q_{n})_{n=1}^{\infty }\searrow \frac{1}{2m+2}$ such that for
every $n\in \mathbb{N}$ the self-similar set $E(x_{q_{n}})$ is a Cantor sets
of zero Lebesgue measure.

By \cite{BFS}, for $q=\frac{1}{2m+2}$ the achievement set $E(x_{q})$ is a
Cantorval. Therefore, if $m>2$, there are three ratios $p<q<r$ such that $%
E(x_{p})$ and $E(x_{r})$ are Cantor sets while $E(x_{q})$ is a Cantorval. By
our best knowledge it is the first result of this type for multigeometric
sequences.

Now we will focus on Ferens-like sequences $x_{q}=(m+k,\dots ,k;q)$ where $%
m\geq k$. \smallskip

For $k=1$ the Ferens-like sequence $x_{q}=(m+1,\dots ,2,1;q)$ has%
\begin{equation*}
\Sigma =\big\{0,1,2,\dots ,(m+2)\left( m+1\right) /2\big\}\text{.}
\end{equation*}%
The set $E(x_{q})$ is a Cantor set (for $q<\frac{1}{|\Sigma |}$) or an
interval (for $q\geq \frac{1}{|\Sigma |}$); see Theorem 7 in \cite{BFS}),
Theorem \ref{kakeya} or Theorem~\ref{th3}. \smallskip

For $k=2$, the \textquotedblleft shortest" Ferens-like sequence is $%
x_{q}=(4,3,2;q)$. For this sequence%
\begin{equation*}
\Sigma =\left\{ 0,2,3,4,5,6,7,9\right\} \text{.}
\end{equation*}%
Note that the same $\Sigma $ has Guthrie-Nymann-Jones sequence $(3,2,2,2;q)$
(see Example \ref{ex9a}). It follows that $E(x_{q})$ is a Cantor set for $%
q\in \big(0,\frac{1}{8}\big)$ and $E(x_{q})$ is a Cantorval for $q=\frac{1}{8%
}$. By Theorem~\ref{th3}, $K(\Sigma ;q)$ is an interval for $q\geq I(\Sigma
)=\frac{2}{11}$ and a Cantorval for $q\in \big(\frac{1}{6},\frac{2}{11}\big)$%
. As shown in Example~\ref{ex9a}, for almost all $q\in \big(\frac{1}{8},%
\frac{1}{6}\big)$ the set $K(\Sigma ;q)$ has positive Lebesgue measure.
Using Theorem~\ref{Th2}, we can find a decreasing sequence $(q_{n})$ tending
to $\frac{1}{8}$ for which the sets $K(\Sigma ;q_{n})$ have zero Lebesgue
measure.

\smallskip

For $k=3$ the \textquotedblleft shortest" Ferens-like sequence is $%
x_{q}=(6,5,4,3;q)$. For this sequence%
\begin{equation*}
\Sigma =\left\{ 0,3,\dots ,15,18\right\}
\end{equation*}%
and $|\Sigma |=15$. Since $1\in \frac{1}{15}\Sigma $ the set $\Sigma
_{2}=\Sigma +\frac{1}{15}\Sigma $ has less than $\left\vert 15\right\vert
^{2}$ elements (for example $4$ can be presented as $4+0$ or as $3+1$).
Therefore $\frac{1}{15^{2}}|\Sigma _{2}|<1$ and for $q=\frac{1}{15}$ the set
$E(x_{q})$ is a Cantor set according to Theorem \ref{th9}. Moreover,
calculating for $q=\frac{1}{14}>\frac{1}{15}$ the cardinality
\begin{equation*}
|\Sigma_3|=|\Sigma +q\Sigma +q^{2}\Sigma |=2655<14^{3}
\end{equation*}%
and applying Theorem \ref{th9}, we conclude that the achievement set $%
E(x_{q})$ is a Cantor set of zero Lebesgue measure for $q=\frac{1}{14}$. On
the other hand, Corollary~\ref{cor8} implies that for almost all $q\in \big(%
\frac{1}{15},\frac{1}{1+\sqrt{18}})$ the achievement set $E(x_{q})$ has
positive Lebesque measure. The set $\Sigma $ has $i(\Sigma )=\frac{1}{13}$
and $I(\Sigma )=\frac{3}{21}=\frac{1}{7}$. So, in this case we have the
diagram:

\setlength{\unitlength}{1mm}
\begin{picture}(56,23)(-20,0)
\put(-7,12){\line(1,0){120}}
\put(2,15){$\mathcal{C}_0$}
\put(11.5,7){$\frac{1}{15}$}
\put(21,15){$\lambda^+$}
\put(41,15){$\lambda^+$}
\put(32,15){$\mathcal{C}_0$}
\put(31,7){$\frac{1}{14}$}
\put(51,7){$\frac{1}{13}$}
\put(60,15){$\MC$}
\put(72,7){$\frac17$}
\put(92,15){$\mathcal{I}$}
\put(13,12){\circle{1}}
\put(-7.7,7){$0$}
\put(33,12){\circle{1}}
\put(53,12){\circle*{1}}
\put(73,12){\circle*{1.5}}
\put(-7,11){\line(0,1){2}}
\put(113,11){\line(0,1){2}}
\put(112.3,7){$1$}
\end{picture}\newline
As in the previous case, we can use Theorem~\ref{Th2} (taking $a=b=3$ and $%
c=-1$) and find a decreasing sequence $(q_{n})$ tending to $\frac{1}{15}$
such that all $E(x_{q_{n}})$ have zero Lebesgue measure.

\smallskip

Suppose now that $k>3$. For the Ferens-like sequence $x_{q}=(k+m,\dots
,k+1,k;q)$ its sumset $\Sigma $ contains the number $|\Sigma |$, which
implies that $|\Sigma +q\Sigma |<|\Sigma |^{2}$ for $q=\frac{1}{|\Sigma |}$
and therefore $E(x_{q})$ is a Cantor set of zero measure according to
Theorem~\ref{th9}.

\section{Rational ratios}

For a contraction ratio $q\in\{\frac1{n+1}:n\in\mathbb{N}\}$ self-similar
sets of positive Lebesgue measure can be characterized as follows:

\begin{theorem}
\label{t12} Let $\Sigma \subset \mathbb{Z}$ be a finite set, $%
q\in\{\frac1{n+1}:n\in\mathbb{N}\}$ and $\Sigma
_{n}=\sum_{i=0}^{n-1}q^{i}\Sigma $ for $n\in\mathbb{N}$. For the compact set
$K=K(\Sigma;q)$ the following conditions are equivalent:

\begin{itemize}
\item[(i)] $|\Sigma_n| \cdot q^n \geq 1$ for all $n\in\mathbb{N}$;

\item[(ii)] $\inf_{n\in\mathbb{N}}|\Sigma_{n}|\cdot q^n>0$,

\item[(iii)] $\lambda (K)>0.$
\end{itemize}
\end{theorem}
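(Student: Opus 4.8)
The plan is to prove the cyclic chain $(i)\Rightarrow(ii)\Rightarrow(iii)\Rightarrow(i)$, of which two links are nearly immediate. The implication $(i)\Rightarrow(ii)$ is trivial: if $|\Sigma_n|\cdot q^n\ge 1$ for every $n$, then $\inf_n|\Sigma_n|\cdot q^n\ge 1>0$. The implication $(iii)\Rightarrow(i)$ is exactly the contrapositive of Theorem~\ref{th9}: were $|\Sigma_n|\cdot q^n<1$ for some $n$, that theorem would give $\lambda(K)=0$. Thus the entire content lies in $(ii)\Rightarrow(iii)$, and this is where the hypotheses $q=\tfrac1N$ (the reciprocal of an integer $N\ge 2$) and $\Sigma\subset\IZ$ become essential: a mere combinatorial lower bound on the number of cells meeting $K$ controls only box-type quantities and cannot by itself force positive Lebesgue measure, so one needs genuine geometric information.

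For $(ii)\Rightarrow(iii)$ I would first normalize. Replacing $\Sigma$ by $\Sigma-\min\Sigma$ translates $K$, and changes neither $|\Sigma_n|$, nor $\lambda(K)$, nor the inclusion $\Sigma\subset\IZ$; so I assume $\min\Sigma=0$ and set $M=\diam(\Sigma)=\max\Sigma$ and $D=\diam K=M/(1-q)$, so that $K\subseteq[0,D]$ and $0\in\Sigma\subseteq K$. Iterating $K=\Sigma+qK$ yields $K=\Sigma_n+q^nK$ (as in the proof of Theorem~\ref{th9}), and since $K\subseteq[0,D]$ I introduce the outer approximations
\[
\hat K_n=\Sigma_n+[0,q^nD]\ \supseteq\ \Sigma_n+q^nK=K.
\]
A one-line computation, $\max\Sigma+qD=M+qD=D$, shows $\Sigma+[0,qD]\subseteq[0,D]$ and hence $\hat K_{n+1}=\Sigma_n+q^n(\Sigma+[0,qD])\subseteq\hat K_n$; meanwhile $\Sigma_n\subseteq K$ together with the vanishing piece-diameters $q^nD\to0$ gives $\bigcap_n\hat K_n=K$. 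By continuity of measure from above, $\lambda(K)=\lim_n\lambda(\hat K_n)$.

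The decisive step, and the only place where $q=\tfrac1N$ and $\Sigma\subset\IZ$ are used, is the grid property $\Sigma_n=\sum_{i=0}^{n-1}q^i\Sigma\subseteq q^{\,n-1}\IZ$, so that any two distinct points of $\Sigma_n$ differ by at least $q^{\,n-1}$. Consequently the half-open intervals $J_t=[t,t+q^{\,n-1})$, $t\in\Sigma_n$, are pairwise disjoint, and the constituent interval $[t,t+q^nD]\subseteq\hat K_n$ contributes $[t,\,t+\min(q^nD,q^{\,n-1}))\subseteq\hat K_n\cap J_t$ inside each cell. Summing over the disjoint $J_t$ gives
\[
\lambda(\hat K_n)\ \ge\ \sum_{t\in\Sigma_n}\min(q^nD,q^{\,n-1})\ =\ |\Sigma_n|\cdot q^n\cdot\min(D,N),
\]
using $q^{\,n-1}=Nq^n$. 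Letting $n\to\infty$ and invoking $(ii)$ yields $\lambda(K)\ge\min(D,N)\cdot\inf_n|\Sigma_n|q^n>0$, which is $(iii)$.

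I expect the main obstacle to be precisely this last measure lower bound: the intervals $t+[0,q^nD]$ genuinely overlap when $D\ge N$, so one cannot simply add their lengths, and must instead extract a disjoint sub-family carrying a fixed proportion of the mass. The arithmetic constraint $\Sigma_n\subseteq q^{\,n-1}\IZ$ is exactly what supplies such a family, turning a counting estimate into an honest packing of disjoint intervals; this is also the structural reason the statement is confined to rational ratios $q=\tfrac1N$ and integer digit sets rather than holding for arbitrary $\Sigma$ and $q$.
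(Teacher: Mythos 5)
Your proof is correct and follows essentially the same route as the paper's: the two easy implications are handled identically, and the substance of (ii)$\Rightarrow$(iii) rests on the same key fact $\Sigma_n\subseteq q^{n-1}\mathbb{Z}$ forcing $q^{n-1}$-separation of $\Sigma_n\subseteq K$, combined with continuity of Lebesgue measure along a shrinking family of sets containing $K$. The only cosmetic differences are that you argue directly via the nested outer approximations $\Sigma_n+[0,q^nD]$ (picking up the harmless factor $\min(D,N)$ from possible overlaps), whereas the paper argues the contrapositive using the neighborhoods $H(K,\tfrac12 q^n)$, for which the disjointness gives the cleaner identity $\lambda\big(H(\Sigma_n,\tfrac12 q^n)\big)=|\Sigma_n|\cdot q^n$.
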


\begin{proof}
The implication (iii)$\Rightarrow $(i) follows from Theorem \ref{th9} while
(i)$\Rightarrow $(ii) is trivial. It remains to prove (ii)$\Rightarrow $%
(iii). Suppose that $\lambda (K)=0$. Given any $r>0$ consider the $r$%
-neighborhood $H(K,r) =\{ h\in \mathbb{R}:\mathrm{dist}(h,K) <r\}$ of the
set $K=K(\Sigma;q)$. Take any point $z\in \big\{\sum_{i=n}^\infty
x_iq^i:\forall i\ge n\;x_i\in\Sigma\big\}$ and observe that $%
\Sigma_n+z\subset K=\big\{\sum_{i=0}^\infty
x_iq^i:(x_i)_{i\in\omega}\in\Sigma^\omega\big\}$, which implies that $%
H(\Sigma _{n}+z,r) \subset H(K,r)$ for all $r>0$. The continuity of the
Lebesgue measure implies that $\lambda(H(K,r))\rightarrow 0$ when $r$ tends
to zero. It follows from $\Sigma\subset\mathbb{Z}$ and $\frac1q\in\mathbb{N}$
that
\begin{equation*}
\Sigma _{n}\subset q^{n-1}\cdot \mathbb{Z}\text{. }
\end{equation*}%
Hence, for any two different points $x$ and $y$ from $\Sigma _{n}$, the
distance between $x$ and $y$ is no less then $q^{n-1}>q^n$. Therefore, for
any $n\in \mathbb{N}$,%
\begin{equation*}
|\Sigma_n|\cdot q^n=\lambda \big( H\big( \Sigma_{n},\tfrac{1}{2}q^{n}\big)%
\big)=\lambda\big(H\big(\Sigma_n+z,\tfrac12q^n\big)\big)\le\lambda(K,%
\tfrac12q^n)
\end{equation*}%
which means that $\lim_{n\rightarrow \infty }|\Sigma _{n}|\cdot q^n=0$.
\end{proof}

Theorems~\ref{t12} combined with Corollary 2.3 of \cite{Schief} imply the
following corollary.

\begin{corollary}
For a finite subset $\Sigma\subset\mathbb{Z}$ and the number $q=\frac{1}{%
|\Sigma|}<1$ the following conditions are equivalent:

\begin{itemize}
\item[(1)] $K(\Sigma;q)$ has positive Lebesgue measure;

\item[(2)] $K(\Sigma;q)$ contains an interval;

\item[(3)] for every $n\in\mathbb{N}$ the set $\Sigma
_{n}=\sum_{k=0}^{n-1}q^k\Sigma$ has cardinality $|\Sigma _{n}|=|\Sigma|^{n}$.
\end{itemize}
\end{corollary}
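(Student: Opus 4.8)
The plan is to deduce this three-way equivalence by closing a short cycle of implications, using Theorem~\ref{t12} for the numerical part and Corollary~2.3 of \cite{Schief} for the geometric part. Note first that $q=\frac1{|\Sigma|}$ is of the form $\frac1{n+1}$ required by Theorem~\ref{t12}, so that theorem is available. The implication (2)$\Rightarrow$(1) is then immediate, since a set containing an interval automatically has positive Lebesgue measure.

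Next I would establish (1)$\Leftrightarrow$(3). The key observation is that, writing $s=|\Sigma|$, for each $n$ the set $\Sigma_n=\sum_{k=0}^{n-1}q^k\Sigma$ consists of sums indexed by the $s^n$ tuples in $\Sigma^n$, so $|\Sigma_n|\le s^n$ and hence $|\Sigma_n|\cdot q^n=|\Sigma_n|/s^n\le 1$ for every $n$. Consequently condition (i) of Theorem~\ref{t12}, namely $|\Sigma_n|\cdot q^n\ge 1$ for all $n$, holds if and only if $|\Sigma_n|\cdot q^n=1$ for all $n$, i.e. $|\Sigma_n|=s^n=|\Sigma|^n$ for all $n$; this is exactly condition (3) of the corollary. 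Since Theorem~\ref{t12} asserts the equivalence of its condition (i) with $\lambda(K)>0$, we obtain (3)$\Leftrightarrow$(1).

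It remains to prove (1)$\Rightarrow$(2), which is the real content and the step I expect to be the main obstacle, since passing from positive measure to a genuine interior point requires the separation theory of self-similar sets rather than the elementary counting above. Here I would invoke Corollary~2.3 of \cite{Schief}: for the self-similar set $K=K(\Sigma;q)$, generated by the similarities $x\mapsto qx+\sigma$ with $\sigma\in\Sigma$, positivity of $\lambda(K)$ forces the open set condition and, in particular, forces $K$ to have nonempty interior. In the line $\mathbb{R}$ a nonempty interior means precisely that $K$ contains an interval, which is (2). The care needed is to verify that Schief's hypotheses apply to our iterated function system, i.e. that all the maps share the common ratio $q$ and act on $\mathbb{R}$, so that the similarity dimension equals $1$ and positive Lebesgue measure is the correct positivity assumption. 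Combining (2)$\Rightarrow$(1), (1)$\Leftrightarrow$(3) and (1)$\Rightarrow$(2) closes the cycle and yields the stated equivalence.
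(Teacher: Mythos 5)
Your proposal is correct and follows exactly the route the paper indicates: the paper gives no detailed proof but states that the corollary follows from Theorem~\ref{t12} combined with Corollary~2.3 of \cite{Schief}, which is precisely your decomposition (the counting observation $|\Sigma_n|\le|\Sigma|^n$ identifying condition (i) of Theorem~\ref{t12} with condition (3), and Schief's result for (1)$\Rightarrow$(2) since the similarity dimension equals $1$). You have merely filled in the details the paper leaves implicit.
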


\begin{problem}
Is it true that for a finite set $\Sigma\subset\mathbb{Z}$ and any
(rational) $q\in(0,1)$ the self-similar set $K(\Sigma;q)$ has positive
Lebesgue measure if and only if it contains an interval?
\end{problem}

\begin{remark}
According to \cite{Ex}, there exists a 10-element set $\Sigma $ on the
complex plane $\mathbb{C}$ such that for $q=\frac{1}{3}$ the self-similar
compact set $K(\Sigma ;q)=\Sigma +qK(\Sigma ;q)\subset \mathbb{C}$ has
positive Lebesgue measure and empty interior in $\mathbb{C}$.
\end{remark}

\end{document}